\numberwithin{equation}{section}
\newtheorem{theorem}{\bf Theorem}[section]
\newtheorem{corollary}[theorem]{\bf Corollary}
\newtheorem{lemma}[theorem]{\bf Lemma}
\newtheorem{proposition}[theorem]{\bf Proposition}
\theoremstyle{definition}
\newtheorem{definition}[theorem]{\bf Definition}
\theoremstyle{remark}
\newtheorem{remark}[theorem]{Remark}
\newtheorem{example}[theorem]{Example}
\numberwithin{equation}{section}
\let\c@theorem\c@equation
\newtheorem*{namedtheorem}{\theoremname}
\newcommand{\theoremname}{testing}
\renewcommand{\leq}{\leqslant}
\renewcommand{\geq}{\geqslant}
\newcommand{\sseq}{\subseteq}
\newcommand{\gen}[1]{\langle #1 \rangle}
\renewcommand{\hat}{\widehat}
\newcommand{\conj}{\operatorname{conj}}
\newcommand{\xra}{\xrightarrow}
\newcommand{\bD}{\mathbf{D}}
\newcommand{\bW}{\mathbf{W}}
\newcommand{\C}{\mathcal{C}}
\newcommand{\F}{\mathcal{F}}
\renewcommand{\L}{\mathcal{L}}
\newcommand{\M}{\mathcal{M}}
\renewcommand{\O}{\mathcal{O}}
\newcommand{\T}{\mathcal{T}}
\newcommand{\W}{\mathcal{W}}
\newcommand{\Z}{\mathcal{Z}}
\renewcommand{\phi}{\varphi}
\newcommand{\typ}{\textup{typ}}
\DeclareMathAlphabet\EuR{U}{eur}{m}{n}
\SetMathAlphabet\EuR{bold}{U}{eur}{b}{n}
\newcommand{\curs}{\EuR}
\newcommand{\Ab}{\curs{Ab}}
\newcommand{\id}{\operatorname{id}}
\newcommand{\Hom}{\operatorname{Hom}}
\newcommand{\Mor}{\operatorname{Mor}}
\newcommand{\Iso}{\operatorname{Iso}}
\newcommand{\Ob}{\operatorname{Ob}}
\newcommand{\Aut}{\operatorname{Aut}}
\newcommand{\Out}{\operatorname{Out}}
\newcommand{\Inn}{\operatorname{Inn}}
\newcommand{\op}{\operatorname{op}}
\begin{document}
\title[Rigid automorphisms of linking systems]{Rigid automorphisms of linking systems}
\author{George Glauberman}
\address{Department of Mathematics \\ University of Chicago \\ 5734 S.
University Ave \\ Chicago, IL 60637}
\email{gg@math.uchicago.edu}
\author{Justin Lynd}
\address{Department of Mathematics \\ University of Louisiana at Lafayette \\ Lafayette, LA 70504}
\email{lynd@louisiana.edu}
\subjclass[2010]{Primary 20D20, Secondary 20D45}
\date{\today}
\thanks{The first author is partially supported by a Simons Foundation
Collaboration Grant. The second author was partially supported by NSA Young
Investigator Grant H98230-14-1-0312, Marie Curie Fellowship No. 707778, and NSF
Grant DMS-1902152 during the preparation of this manuscript. The authors thank
these organizations for their support. The first author expresses his gratitude
to the Institute of Mathematics at the University of Aberdeen for its
hospitality during a research visit in 2017. The authors would also like to
thank the Isaac Newton Institute for Mathematical Sciences, Cambridge, for
support and hospitality during the programme Groups, representations and
applications, where work on this paper was undertaken and supported by EPSRC
grant no EP/R014604/1.}

\begin{abstract}
A rigid automorphism of a linking system is an automorphism which restricts to
the identity on the Sylow subgroup. A rigid inner automorphism is conjugation
by an element in the center of the Sylow subgroup. At odd primes, it is known
that each rigid automorphism of a centric linking system is inner.  We prove
that the group of rigid outer automorphisms of a linking system at the prime
$2$ is elementary abelian, and that it splits over the subgroup of rigid inner
automorphisms. In a second result, we show that if an automorphism of a finite
group $G$ restricts to the identity on the centric linking system for $G$, then
it is of $p'$-order modulo the group of inner automorphisms, provided $G$ has
no nontrivial normal $p'$-subgroups.  We present two applications of this last
result, one to tame fusion systems.
\end{abstract}

\maketitle

\section{Introduction}
A saturated fusion system $\F$ is a category in which the objects are the
subgroups of a fixed finite $p$-group $S$, and the morphisms are injective
group homomorphisms between subgroups which are subject to axioms first
outlined by Puig \cite{Puig2006, AschbacherKessarOliver2011}. When $G$ is a
finite group with Sylow $p$-subgroup $S$, there is a saturated fusion system
$\F = \F_S(G)$ in which the morphisms are the $G$-conjugation maps between
subgroups.  One of the important properties of this category is that it keeps
precisely the data required to recover the homotopy type of the Bousfield-Kan
$p$-completion $BG_p^\wedge$ of the classifying space of $G$, as shown in the
Martino-Priddy Conjecture, proved by Oliver \cite{Oliver2004,Oliver2006}.
Recovery of $BG_p^\wedge$, or a $p$-complete space denoted $B\F$ when no group
$G$ is associated with $\F$, is based on the construction of a centric
linking system $\L$ for $\F$, an extension category of $\F$ whose existence and
uniqueness up to rigid isomorphism was first established in general by
Chermak \cite{Chermak2013}.  From a group theoretic point of view, centric
linking systems, or more generally the transporter systems of Oliver-Ventura
\cite{OliverVentura2007} and the localities of Chermak \cite{Chermak2013},
provide finer approximations to $p$-local structure. They abstract the
transporter categories of finite groups, and form structures appearing in new
recent approaches to revising the classification of finite simple groups. 

We study here in more detail the comparison maps between automorphism groups of
finite groups, linking systems, and fusion systems. When $\L$ is a centric
linking system associated to the fusion system $\F$, there are groups of
automorphisms $\Aut(\L)$ and $\Aut(\F)$, and a map $\tilde{\mu}\colon \Aut(\L)
\to \Aut(\F)$ given essentially by restriction to the Sylow group $S$.  When
$\L = \L_S^c(G)$ and $\F = \F_S(G)$ for some finite group $G$, there is also a
comparison map $\tilde{\kappa}_G\colon N_{\Aut(G)}(S) \to \Aut(\L)$, where
$N_{\Aut(G)}(S)$ consists of those automorphism of $G$ which leave $S$
invariant.  These induce a pair of maps
\[
\Out(G) \xra{\kappa_G} \Out(\L) \xra{\mu_\L} \Out(\F)
\]
on outer automorphism groups. We write $\Aut_0(\L)$ for the group of
\emph{rigid automorphisms} of $\L$, namely $\ker(\tilde{\mu}_\L)$.  Similarly,
$\Out_0(\L)$ is short for $\ker(\mu_\L)$. 

It follows from the exact sequence of
\cite[III.5.12]{AschbacherKessarOliver2011} and Chermak's Theorem that $\mu_\L$
is an isomorphism if $p$ is odd, and is surjective with kernel an abelian
$2$-group when $p = 2$.  Moreover, the surjectivity of $\kappa_G$ has been
studied intensively in articles by Andersen, Oliver, and Ventura, and by
Broto, Moller, and Oliver.  

Our first result extends the consequences of unique existence of centric
linking systems to show that the kernel of $\mu_\L$ is in fact of exponent at
most $2$, in general, when $p = 2$. To make it easier to apply, we state and
prove this in the slightly more general setting of a linking locality
(defined just below), and in three equivalent ways.  Set $k(p) = 1$ if $p$ is
odd, and $k(p) = 2$ if $p = 2$. In particular, a group of exponent $k(p)$ is
the trivial group if $p$ is odd and is elementary abelian if $p = 2$. 

\begin{theorem}[Linking locality version]
\label{T:main-loc}
If $(\L, \Delta, S)$ is a linking locality at the prime $p$, then the group
$\Out_0(\L)$ of rigid outer automorphisms of $\L$ is abelian of exponent at
most $k(p)$. Moreover, the exact sequence
\[
1 \to \Aut_{Z(S)}(\L) \to \Aut_0(\L) \to \Out_0(\L) \to 1
\]
splits. 
\end{theorem}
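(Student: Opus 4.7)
For $p$ odd the theorem is vacuous: by the remarks preceding the statement, $\mu_\L$ is already an isomorphism, so $\Out_0(\L)=1$ and the sequence splits trivially. I therefore focus on $p=2$, where $\Out_0(\L)$ is already known to be an abelian $2$-group; the two tasks are to strengthen this to exponent $\leq 2$ and to exhibit a splitting.

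My plan is to parametrize each $\alpha\in\Aut_0(\L)$ by cocycle data into the center functor $\Z_\F\colon P\mapsto Z(P)$. For $P\in\Delta$, the restriction $\alpha|_{\Aut_\L(P)}$ lifts the identity on $\Aut_\F(P)$ and is the identity on the kernel $\delta_P(Z(P))$, since $Z(P)\leq S$ and $\alpha$ is rigid; such an automorphism of the extension $1\to Z(P)\to\Aut_\L(P)\to\Aut_\F(P)\to 1$ is classified by a derivation $\chi_P\in Z^1(\Aut_\F(P),Z(P))$ vanishing on $\Inn(P)$. Compatibility of $\alpha$ with inclusions and with the partial product in $\L$ should assemble the family $(\chi_P)$ into a $1$-cocycle on the orbit category $\O(\F^c)$ with values in $\Z_\F$, identifying $\Out_0(\L)$ with $\lim^{1}_{\O(\F^c)}\Z_\F$ in the familiar way. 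Crucially, because $\alpha$ fixes $\chi_P(g)\in Z(P)\leq S$ pointwise, iteration yields $\alpha^{k}(g)=g\cdot\chi_P(g)^{k}$ on $\Aut_\L(P)$, so the class of $\alpha^{2}$ is represented by the doubled cocycle $\chi^{2}$.

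The principal obstacle is then to show that $\chi^{2}$ is always a coboundary. I would attempt this by producing, for each $P\in\Delta$, a local bounding element $w_P\in Z(P)$ with $\chi_P(g)^{2}=g(w_P)w_P^{-1}$, exploiting that at $p=2$ the squaring map on $Z(P)$ factors through maps with image in the $\Aut_\F(P)$-invariants, and then gluing these $w_P$ into a coherent $0$-cochain on $\O(\F^c)$; the coherent gluing, which would require the saturation axioms together with the structure of a linking locality, is where I expect the main technical difficulty to lie. Once exponent $2$ is in hand, the splitting follows comparatively routinely: for each $\alpha\in\Aut_0(\L)$ with $\alpha^{2}=c_w\in\Aut_{Z(S)}(\L)$, applying the same bounding construction to the principal cocycle $g\mapsto w$ produces $v\in Z(S)$ with $v\alpha(v)=w$, so $\alpha\,c_v$ is an involution representing $[\alpha]$; a Schur-Zassenhaus argument, using centrality of $\Aut_{Z(S)}(\L)$ in $\Aut_0(\L)$ and the elementary abelian structure of $\Out_0(\L)$, then promotes these involution lifts into a homomorphic section of $\Aut_0(\L)\twoheadrightarrow\Out_0(\L)$.
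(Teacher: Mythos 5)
Your approach (parametrize $\Aut_0(\L)$ by cocycles for the center functor, identify $\Out_0(\L)$ with $\lim^1_{\O(\F^c)}\Z_\F$, and show the squared cocycle is a coboundary) is genuinely different from the paper's, which never goes through cohomology at all for Theorem~\ref{T:main-loc}. The paper works entirely inside the partial group: it first normalizes $\tau\in\Aut_0(\L)$ so that $\tau$ is the identity on $N_\L(J(S))$ (using \cite[Lemma~8.2]{GlaubermanLynd2016}, that a rigid automorphism of a characteristic-$p$ group is conjugation by a central element), and then runs an induction on the Thompson ordering $Q <_J P \iff d(Q)<d(P)$ or $(d(Q)=d(P)$ and $|J(Q)|<|J(P)|)$, driven by the Alperin--Goldschmidt theorem for localities and by Glauberman's Theorem~A from \cite{Glauberman1968wc}, to show $\tau^{k(p)}$ is the identity on every $N_\L(Q)$. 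The cohomological statement (Theorem~\ref{T:main-cohom}) is then \emph{deduced from} the locality statement via \cite[III.5.12]{AschbacherKessarOliver2011}, not the other way around.

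The gap in your proposal is precisely the step you flag as ``where I expect the main technical difficulty to lie'': you never actually produce the bounding elements $w_P$, and the heuristic you offer is not correct. The squaring map on $Z(P)$ is $\Aut_\F(P)$-equivariant, so its image $Z(P)^2$ is an invariant \emph{subgroup}, but elements of $Z(P)^2$ are not $\Aut_\F(P)$-fixed in general; there is no a priori reason that a $1$-cocycle $\chi_P\in Z^1(\Aut_\F(P),Z(P))$ vanishing on $\Inn(P)$ has $\chi_P^2$ a coboundary. Indeed, if that were formal, the exponent bound would hold for arbitrary transporter systems, but the paper explicitly notes it fails for transporter systems that are not linking localities (take $G = O_{p'}(G)\times H$ with a $p^2$-order automorphism of the $p'$-core). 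So the characteristic-$p$ condition has to enter in an essential way, and your outline doesn't say where. The splitting argument has a parallel problem: you need $w\in Z(S)^2\cdot Z(\F)$ to pass from $\alpha^2=c_w$ to an involutive representative $\alpha c_v$, and nothing you've written produces such a square root; the paper instead gets the splitting for free by observing that the set $E$ of rigid automorphisms that are the identity on $N_\L(J(S))$ is an $\mathbb{F}_2$-vector space surjecting onto $\Out_0(\L)$, so one can choose a linear complement. If you want to salvage the cohomological route, you would essentially have to rediscover a Thompson-subgroup argument internal to $\O(\F^c)$, at which point it's cleaner to work in $\L$ directly as the paper does.
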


\begin{theorem}[Linking system version]
\label{T:main-trans}
If $\L$ is a linking system at the prime $p$ (in the general sense of
\cite{Henke2019}), then the group $\Out_0(\L)$ of rigid outer automorphisms of
$\L$ is abelian of exponent at most $k(p)$. Moreover, the exact sequence
\[
1 \to \Aut_{Z(S)}(\L) \to \Aut_0(\L) \to \Out_0(\L) \to 1
\]
splits.
\end{theorem}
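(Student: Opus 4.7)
When $p$ is odd, $k(p) = 1$, so the statement reduces to $\Out_0(\L) = 1$; this is immediate from Chermak's theorem together with \cite[III.5.12]{AschbacherKessarOliver2011}, which combine to show that $\mu_\L$ is an isomorphism, and the sequence splits trivially. From now on I assume $p = 2$, where the quoted results only give that $\Out_0(\L)$ is an abelian $2$-group. The plan is to reformulate $\Out_0(\L)$ as a first derived limit. Given $\alpha \in \Aut_0(\L)$, the assumption that $\alpha$ restricts to the identity on $S$, together with the fact that the fiber of the projection from $\L$ to $\F$ over a morphism $\phi$ with source $P$ is a $Z(P)$-torsor, determines a function $t_\alpha$ sending each morphism $\phi$ in $\L$ to an element of $Z(P)$ with $\alpha(\phi) = \phi\cdot t_\alpha(\phi)$. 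Functoriality of $\alpha$ translates into the cocycle condition on $t_\alpha$, while rigid inner automorphisms by $z \in Z(S)$ correspond to coboundaries. This identifies $\Out_0(\L)$ with the first higher limit of the center functor $\Z$ on the relevant orbit category, and identifies $\Aut_{Z(S)}(\L)$ as the image of $Z(S)$ under the coboundary map.

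The essential task is then twofold: show that at $p = 2$ this $\lim^1$ has exponent at most $2$, and show that the extension $1 \to \Aut_{Z(S)}(\L) \to \Aut_0(\L) \to \Out_0(\L) \to 1$ splits. For the exponent bound I would aim to show, for every rigid $\alpha$, that $\alpha^2$ is conjugation by an element $z \in Z(S)$ produced explicitly from the values of $t_\alpha$, exploiting the $p = 2$-specific structure of the centers and the action of $\Aut_\F(S)$ on $Z(S)$. Once the outer group is known to be elementary abelian, splitting follows by the following soft argument: pick set-theoretic preimages of a basis of $\Out_0(\L)$ in $\Aut_0(\L)$, observe via the exponent bound that their squares and pairwise commutators lie in the abelian normal subgroup $\Aut_{Z(S)}(\L)$, and then adjust each preimage by an element of $Z(S)$ to force its square to the identity, a modification which is possible because one is now solving a linear problem over $\mathbb{F}_2$.

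\textbf{Main obstacle.} The exponent-$2$ bound is the heart of the proof. The cocycle reformulation is routine, and the splitting is a fairly soft consequence of the exponent statement combined with the abelianness of the kernel. But producing the element of $Z(S)$ that witnesses $\alpha^2$ as rigid inner uniformly in $\alpha$ demands genuine group-theoretic input from the $p = 2$ structure. The most plausible route is an Alperin-style reduction: use the fact that $\L$ is generated, as a partial group or category, by the normalizers of essential subgroups together with $N_\L(S)$, so that a cocycle $t_\alpha$ is determined by its restrictions there; then verify by case analysis on essentials that $2 t_\alpha$ is a coboundary. The main complications should come from $\F$-centric subgroups whose $\F$-automizers contain $p$-elements not realized by $S$-conjugation, and controlling those contributions is where the bulk of the work must lie.
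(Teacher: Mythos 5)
Your reformulation of $\Out_0(\L)$ as a first derived limit of the center functor is indeed how the paper connects the linking system statement to the cohomological one (Lemma~\ref{L:iso-ses}), but there are two structural gaps in your plan and, more importantly, the heart of the argument---the exponent bound---is left as an acknowledged placeholder rather than a proof.

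First, your cocycle reformulation, and the appeal to \cite[III.5.12]{AschbacherKessarOliver2011} in the odd case, are only valid when $\Delta = \F^c$: the torsor structure on fibers of $\pi$ over a morphism with source $P$ is $Z(P)$ precisely because $E(P) = \delta_P(Z(P))$ for centric $P$, and the higher limit sits over $\O(\F^c)$. A general linking system in Henke's sense has object set $\Delta$ that may properly contain $\F^c$ (e.g.\ all subcentric subgroups), so the reduction to the centric case is a genuine step. The paper supplies it by Chermak descent (Proposition~\ref{P:resiso}): restriction from a linking locality with object set $\Delta^+$ to one with $\Delta \subseteq \Delta^+$ induces an isomorphism on rigid automorphism groups. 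You would need something equivalent.

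Second, your proposed splitting argument does not quite work as stated. Suppose $\alpha \in \Aut_0(\L)$ has $\alpha^2 = c_z$ for some $z \in Z(S)$, and you replace $\alpha$ by $\alpha c_w$. Since $\alpha$ is rigid it commutes with $c_w$, so $(\alpha c_w)^2 = c_{z w^2}$, and forcing this to be trivial requires $z$ to be a square modulo $Z(\F)$. There is no reason for that, and a similar problem arises with commutators of two basis preimages. The paper avoids this by showing something stronger in Steps 1--6: the \emph{entire} subgroup $E$ of rigid automorphisms that restrict to the identity on $N_\L(J(S))$ has exponent at most $2$, so $E$ itself is an $\mathbb{F}_2$-vector space, and one takes an honest vector-space complement to $E \cap \Aut_{Z(S)}(\L)$. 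In other words, the splitting is not a soft consequence of the exponent statement for $\Out_0(\L)$; it is a consequence of the exponent statement for a suitably chosen transversal subgroup.

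Finally, and most centrally, the exponent-$2$ bound itself is not proved in your proposal. Your suggested route via Alperin generation and case analysis on essentials is not what the paper does, and it is unclear how it would close without a substitute for the key group-theoretic input. The actual argument uses a rigid inner adjustment to reduce to the case where $\tau$ is the identity on $N_\L(J(S))$ (using that normalizers in a linking locality are finite groups of characteristic $p$, together with \cite[Lemma~8.2]{GlaubermanLynd2016}), and then runs an induction over an ordering $<_J$ built from Thompson subgroups and well-placed subgroups in Craven's sense. The step that produces the factor of $2$ is an application of Glauberman's Theorem~A on weakly closed elements \cite{Glauberman1968wc}: if $\tau$ on $N_\L(Q)$ is conjugation by $z \in Z(N_S(Q))$ and $z^{k(p)}$ is not central, then $z^{k(p)}$ fails to be centralized already by $N_{N_\L(Q)}(J(N_S(Q)))$, which lets the induction proceed to a larger subgroup. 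Nothing of that mechanism appears in your sketch, so the claimed exponent bound remains unestablished.
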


\begin{theorem}[Cohomological version]
\label{T:main-cohom}
Let $\F$ be a saturated fusion system over the finite $p$-group $S$, let $\O(\F^c)$
be the orbit category of $\F$-centric subgroups, and let $\Z_\F\colon
\O(\F^{c})^{\op}\to \mathsf{Ab}$ denote the center functor.  Then
$\lim^1\Z_\F$ is of exponent at most $k(p)$.  Moreover, the exact sequence
\[
1 \to \hat{B}(\O(\F^c), \Z_\F) \to \widehat{Z}^1(\O(\F^c), \Z_\F) \to {\lim}^1 \Z_\F \to 1 
\]
splits.
\end{theorem}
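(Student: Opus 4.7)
The plan is to deduce Theorem~\ref{T:main-cohom} from Theorem~\ref{T:main-trans} via the classical dictionary identifying $\lim^1 \Z_\F$ with the rigid outer automorphism group of the centric linking system of $\F$. By Chermak's theorem \cite{Chermak2013}, $\F$ admits a centric linking system $\L = \L_\F^c$, which is in particular a linking system in the sense of \cite{Henke2019}, so Theorem~\ref{T:main-trans} applies to $\L$.

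The key step is to produce a commutative diagram of abelian groups with exact rows
\[
\xymatrix{
1 \ar[r] & \Aut_{Z(S)}(\L) \ar[r] \ar[d]^{\cong} & \Aut_0(\L) \ar[r] \ar[d]^{\cong} & \Out_0(\L) \ar[r] \ar[d]^{\cong} & 1 \\
1 \ar[r] & \hat{B}(\O(\F^c), \Z_\F) \ar[r] & \widehat{Z}^1(\O(\F^c), \Z_\F) \ar[r] & \lim^1 \Z_\F \ar[r] & 1
}
\]
in which all three vertical maps are isomorphisms. The middle map sends a normalized $1$-cocycle $c = (c_\phi)_\phi$ to the rigid automorphism $\alpha_c$ of $\L$ that twists each morphism $\phi \in \Mor_\L(P,Q)$ by the image of $c_\phi \in Z(P)$ under the distinguished monomorphism $\delta_P$. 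The cocycle condition on $c$ translates exactly to the functoriality of $\alpha_c$, while the normalization $c_{\id_P} = 1$ ensures $\alpha_c$ fixes identity morphisms. Under this correspondence, normalized coboundaries match conjugations by elements of $Z(S)$, yielding the leftmost vertical isomorphism. This dictionary is essentially a refinement of \cite[Proposition~III.5.12]{AschbacherKessarOliver2011}.

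With the diagram in place, Theorem~\ref{T:main-cohom} follows immediately: the exponent bound on $\Out_0(\L)$ provided by Theorem~\ref{T:main-trans} transfers to $\lim^1 \Z_\F$, and any splitting of the upper extension pulls back through the vertical isomorphisms to a splitting of the lower one.

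The one point that requires care is the verification of the leftmost isomorphism $\hat{B}(\O(\F^c), \Z_\F) \cong \Aut_{Z(S)}(\L)$, i.e.\ that under the dictionary the normalized coboundaries correspond \emph{precisely} to conjugations in $\L$ by elements of $Z(S)$, modulo the subgroup $Z(\F) = \lim^0 \Z_\F$ of elements acting trivially on $\L$. Handling this cleanly requires fixing a compatible convention for the normalized bar complex computing $\lim^* \Z_\F$ and matching it with the definition of $\Aut_{Z(S)}(\L)$ adopted earlier in the paper; no further mathematical input is needed, since the genuine content is already encoded in Theorem~\ref{T:main-trans}.
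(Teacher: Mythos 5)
Your proposal is correct and follows essentially the same route as the paper, which records the isomorphism of short exact sequences you describe (taking $Z(S)/Z(\F)$ rather than $\Aut_{Z(S)}(\L)$ in the bottom row) as a separate lemma derived from the proof of \cite[Proposition~III.5.12]{AschbacherKessarOliver2011}, and then transfers the splitting and exponent bound from Theorem~\ref{T:main-trans}. The ``point that requires care'' you flag is exactly the content that lemma handles.
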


Here, a \emph{linking locality} in the sense of \cite{Henke2019} (also called a
\emph{proper locality} in \cite{ChermakFL2}), is a locality $(\L,\Delta,S)$
such that $\Delta$ contains all subgroups of $S$ which are centric and radical
in $\F = \F_S(\L)$, the fusion system of $\L$, and such that
$C_{N_\L(P)}(O_p(N_\L(P))) \leq O_p(N_\L(P))$ for each $P \in \Delta$.
Similarly, a \emph{linking system} is a transporter system $\L$ associated with
a saturated fusion system $\F$ such that $\Ob(\L)$ contains all $\F$-centric
radical subgroups and such that $C_{\Aut_\L(P)}(O_p(\Aut_\L(P))) \leq
O_p(\Aut_\L(P))$ for each $P \in \Ob(\L)$. Other definitions of the term
``linking system'' without further qualification, such as in
\cite[Definition~III.4.1]{AschbacherKessarOliver2011}, are special cases of
this one. 

An automorphism of a locality $\L$ is \emph{inner} if it is induced by
conjugation by an element of $N_\L(S)$, and a similar remark applies to
transporter systems.  In the case of a linking locality or linking system, a
rigid inner automorphism is conjugation by an element of the center of $S$. We
have denoted the group of rigid inner automorphisms by $\Aut_{Z(S)}(\L)$. This
helps to explain some of the terminology and notation in
Theorems~\ref{T:main-loc}-\ref{T:main-trans}.  We explain in more detail in
Section~\ref{S:prelim}. Terminology used in Theorem~\ref{T:main-cohom} is
recalled in Section~\ref{S:rigid-cent}.

As mentioned above, when $p$ is odd and $\L$ is a centric linking system,
Theorems~\ref{T:main-loc}-\ref{T:main-cohom} follow from the proof of existence
and uniqueness of centric linking systems as given in \cite{Oliver2013} or
\cite{GlaubermanLynd2016}.  The connection between existence and uniqueness and
the higher limits of the center functor $\Z_\F$ over the orbit category
$\O(\F^c)$ of $\F$-centric subgroups is given by
\cite[Proposition~III.5.12]{AschbacherKessarOliver2011}.  In particular, this
result identifies $\Out_0(\L)$ with the first derived limit
${\lim}^1_{\O(\F^c)} \Z_\F$ of the center functor.  So when $p$ is odd the
theorems follow from \cite[Theorem~3.4]{Oliver2013} or
\cite[Theorem~1.1]{GlaubermanLynd2016} and an argument, provided in
Section~\ref{S:descent}, which uses Chermak's iterative procedure for extending
a given locality to a new locality on a larger object set.

We shall prove Theorem~\ref{T:main-loc} first in the case of a centric linking
locality, i.e., when $\Delta$ is the collection of $\F$-centric subgroups.  The
proof is applicable for all primes $p$, and so we obtain an alternative,
somewhat simpler proof of the triviality of $\Out_0(\L)$ for $p$ odd,
independent of the main result of \cite{GlaubermanLynd2016}.  We then deduce
Theorem~\ref{T:main-trans} in the same special case, along with
Theorem~\ref{T:main-cohom}. Afterward, we shall prove in
Section~\ref{S:descent} that this implies the seemingly more general statements
in Theorems~\ref{T:main-loc} and \ref{T:main-trans}.  

Along the way, we extend to transporter systems a result of Oliver on
isomorphisms of (quasicentric) linking systems (Proposition~\ref{P:aut-trans}),
and we interpret Chermak's work in the Appendix of \cite{Chermak2013} as an
equivalence of groupoids between localities and transporter systems
(Theorem~\ref{T:local-equiv-trans}). Besides their use in deducing
Theorem~\ref{T:main-trans} from \ref{T:main-loc}, one motivation for these
extensions is to make clear that the results of \cite{Oliver2013,
GlaubermanLynd2016} give existence and uniqueness of centric linking localities
up to \emph{rigid} isomorphism in the same way as the main theorem of
\cite{Chermak2013}. That this is not clear at first is caused by an ambiguity
in which the notion of ``isomorphism'' of transporter commonly in use does not
restrict to the notion of ``automorphism'' commonly in use, but rather to what
should be called ``rigid automorphism''. 

Automorphisms of a finite group that centralize a Sylow subgroup have been
studied by Glauberman, Gross, and others. The main result here can be seen as a
generalization to linking systems of \cite[Theorem~10]{Glauberman1968wc}. The
current work bears the same relationship to \cite[Theorem~10]{Glauberman1968wc}
as the proof of existence and uniqueness of centric linking systems outlined
above does to the work of Gross \cite{Gross1982} and to the recent work of the
authors with Guralnick and Navarro \cite{GGLN2019}.  Our proof of
Theorem~\ref{T:main-loc} is very different from the proof of
\cite[Theorem~10]{Glauberman1968wc}, however, in part because not all subgroups
of $S$ need be objects. 

Recall that for a finite group $G$ with Sylow $p$-subgroup $S$ and centric
linking system $\L_S^c(G)$, there is a comparison homomorphism $\kappa_G \colon
\Out(G) \to \Out(\L_S^c(G))$. It is induced essentially by restriction to
$p$-local structure modulo $p'$-cores, at the level of centric subgroups. In
the course of trying to recover from the above theorems the corresponding
results about finite groups, we were led to the following result, which seems
to be of independent interest. 

\begin{theorem}
\label{T:main2}
Let $p$ be a prime and $G$ a finite group with Sylow $p$-subgroup $S$. If
$O_{p'}(G) = 1$, then the kernel of the map $\kappa_G \colon \Out(G) \to
\Out(\L_S^c(G))$ is a $p'$-group.
\end{theorem}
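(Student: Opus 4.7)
The plan is, after making reductions on $\alpha$, to invoke a classical theorem about automorphisms centralizing a Sylow $p$-subgroup. Let $\alpha \in N_{\Aut(G)}(S)$ be a lift of a class $[\alpha] \in \ker(\kappa_G) \leq \Out(G)$ of $p$-power order; the goal is to show $\alpha \in \Inn(G)$. Since $\tilde{\kappa}_G(\alpha)$ is inner in $\Aut(\L_S^c(G))$, it equals $c_x$ for some $x \in N_G(S)$, and replacing $\alpha$ by $\alpha c_x^{-1}$ (leaving $[\alpha]$ unchanged) we may assume $\tilde{\kappa}_G(\alpha) = \id_\L$. Unwinding this, for every $\F$-centric $P \leq S$ and every $g \in N_G(P)$ we have $\alpha(g) g^{-1} \in O_{p'}(C_G(P))$. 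Specializing to $P = S$ and $g = s \in S$, and using $\alpha(S) = S$, we obtain $\alpha(s) s^{-1} \in S \cap O_{p'}(C_G(S)) = 1$, so $\alpha|_S = \id_S$.

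Next I pass to the $p$-part $\alpha_p$ in the cyclic group $\langle \alpha \rangle$. The complementary $p'$-part $\alpha_{p'}$ has image in $\Out(G)$ that is simultaneously a $p$-element (being $[\alpha][\alpha_p]^{-1}$) and a $p'$-element (being the image of a $p'$-element), hence is trivial. So $\alpha_{p'} \in \Inn(G)$ and $[\alpha_p] = [\alpha]$. Being a power of $\alpha$, the automorphism $\alpha_p$ still satisfies $\tilde{\kappa}_G(\alpha_p) = \id$ and $\alpha_p|_S = \id$. Replacing $\alpha$ by $\alpha_p$, we may therefore assume in addition that $\alpha$ has $p$-power order in $\Aut(G)$.

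The reduced claim is: a $p$-element $\alpha \in \Aut(G)$ with $\alpha|_S = \id$ is inner whenever $O_{p'}(G) = 1$. This is a consequence of Glauberman's \cite[Theorem~10]{Glauberman1968wc} (or of Gross's later refinement), which states that any $p$-automorphism of $G$ centralizing a Sylow $p$-subgroup acts trivially on $G/O_{p'}(G)$. Under our hypothesis $O_{p'}(G) = 1$ this forces $\alpha = \id_G$, so $\alpha \in \Inn(G)$, completing the proof.

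I expect the main obstacle to be the deployment of this classical centralizer-of-Sylow theorem in exactly the form required. In its most general form the result rests on the classification of finite simple groups (through Gross's generalization), and so any proof along these lines inherits that dependence. A CFSG-free alternative would exploit the stronger linking-system hypothesis directly: show that $\alpha$ acts trivially on $F^*(G) = O_p(G) \cdot E(G)$ (using $\alpha|_{O_p(G)} = \id$ and the triviality of $\alpha$ on each $\Aut_\L(P)$ to control the action on the components of $E(G)$), and then invoke $C_G(F^*(G)) \leq F^*(G)$ to realize $\alpha$ as conjugation by an element of $Z(F^*(G))$; but this substitutes a component-by-component analysis of the quasisimple layer for the external citation, and still typically requires classification-type information.
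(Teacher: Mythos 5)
Your initial reductions are sound and closely parallel the paper's opening moves: you pass to a representative $\alpha$ with $\tilde\kappa_G(\alpha) = \id_\L$, deduce $\alpha|_S = \id_S$, and arrange for $\alpha$ to have $p$-power order in $\Aut(G)$. The gap appears at the ``reduced claim.'' You assert that every $p$-element of $\Aut(G)$ centralizing $S$ is inner whenever $O_{p'}(G)=1$, and you attribute this to Glauberman's Theorem~10 (and Gross). That is not what those results say. For $p=2$ the Glauberman--Gross picture is that the image $A$ of $C_{\Aut(G)}(S)$ in $\Out(G)$ has a normal $2$-complement whose quotient is an (abelian, in fact elementary abelian) $2$-group --- not that this quotient is trivial. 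This is exactly what the final proposition of Section~\ref{S:grp} records: $A \cong O_{p'}(A)\rtimes B$ with $B$ an elementary abelian $2$-group, and $B$ is \emph{not} claimed to vanish. So your ``reduced claim'' is a strictly stronger and unestablished statement; if it were a theorem, Theorem~\ref{T:main2} (and much of Section~\ref{S:grp}) would be an immediate corollary. In effect, you have discarded the hypothesis $\tilde\kappa_G(\alpha)=\id_\L$, which is substantially stronger than $\alpha|_S=\id_S$ and is precisely what distinguishes $\ker(\kappa_G)$ from all of $A$.

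The hypothesis you dropped is the one the paper's proof leans on. After the same reductions, the paper forms $\hat G = G\langle a\rangle$ and $\hat S = S\times\langle a\rangle$ (with $O_{p'}(\hat G)=1$) and splits into cases. If $a$ is weakly closed in $\hat S$, the $Z^*_p$-theorem forces $a\in Z(\hat G)$, whence $a=1$. Otherwise, the Alperin--Goldschmidt fusion theorem in $\hat G$ produces an $\F_{\hat S}(\hat G)$-centric-radical $\hat Q\le\hat S$ and $\hat h\in N_{\hat G}(\hat Q)$ with ${}^{\hat h}a\ne a$; one checks $Q:=\hat Q\cap G$ is $\F_S(G)$-centric-radical and that $a$ acts nontrivially on $N_G(Q)/O_{p'}(N_G(Q))=\Aut_\L(Q)$, contradicting $\tilde\kappa_G(a)=\id_\L$. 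Your write-up never engages with this non--weakly-closed branch, which is exactly where the linking-system condition beyond $\alpha|_S=\id_S$ is needed. Your closing paragraph gesturing at an $F^*(G)$-and-components argument is a reasonable heuristic for why some classification-strength input is unavoidable, but it is not a substitute for that missing case.
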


The proof of Theorem~\ref{T:main2} relies on the $Z_p^*$-theorem, namely
the statement that an element $x \in S$ whose only $G$-conjugate in $S$ is $x$
itself must lie in the center of $G$ modulo $O_{p'}(G)$.  Thus, our proof of
Theorem~\ref{T:main2} relies on the Classification of Finite Simple Groups
(CFSG) if $p$ is odd. (This result and its corollaries in Section~\ref{S:grp}
for $p$ odd are the only results in the paper that depend on the CFSG.)  

When $G$ is simple, the cokernel of $\kappa_G$ has been studied extensively in
\cite{AOV2012}, \cite{BrotoMollerOliver2019}, and elsewhere. In particular, it
has now been shown that the fusion system of each finite simple group $G$ is
\emph{tame} in the sense of \cite{AOV2012}, namely, there is a possibly
different finite group $G'$ with Sylow subgroup $S$ such that $\F_S(G) \cong
\F_S(G')$ such that the map $\kappa_{G'}$ is split surjective.
Theorem~\ref{T:main2} has been shown in several special cases in the context of
those works, cf.  \cite[Lemma~5.9,Theorem~5.16]{BrotoMollerOliver2019}.

Theorem~\ref{T:main2} is proved as Theorem~\ref{T:kerkappa} in
Section~\ref{S:grp}, and we give two applications of it: we show that the
splitting condition in the definition of a tame fusion system may be
removed, and we give an interesting reinterpretation of the first author's
work on the Schreier conjecture \cite{Glauberman1966b}. 

\subsection*{Terminology and notation} 
When $G$ is a group and $g \in G$, we write $c_g$ for the left-handed
conjugation homomorphism $x \mapsto gxg^{-1}$ and its restrictions. The image
of a subgroup $P$ under $c_g$ is sometimes written in left-handed exponential
notation ${ }^gP$. We write $\Hom_G(P,Q)$ for the set $\{c_g \mid { }^gP \leq
Q\}$ of conjugation homomorphisms between $P$ and $Q$ induced in $G$.  Given a
finite group $G$ with Sylow $p$-subgroup $S$, the fusion system $\F_S(G)$ is
the category with objects the subgroups of $S$ and with morphism sets
$\Hom_{\F_S(G)}(P,Q) := \Hom_G(P,Q) := \{c_g \mid { }^gP \leq Q\}$.  Our
terminology for fusion systems follows \cite{AschbacherKessarOliver2011}. For
example, $\F^c$ denotes the set of $\F$-centric subgroups, $\F^{r}$ denotes the
set of $\F$-radical subgroups, $\F^f$ denotes the set of fully $\F$-normalized
subgroups, and concatenation in the superscript denotes the intersection of the
relevant sets. 

\section{Transporter systems and localities}\label{S:prelim} 
Throughout this section, $\F$ is a saturated fusion system over a $p$-group
$S$, and $\Delta$ is a nonempty collection of subgroups of $S$ which is closed
under $\F$-conjugacy and passing to overgroups. Fix also another triple $\F'$,
$S'$, and $\Delta'$ of this type. 

\subsection{Transporter systems}

In the case where $\F = \F_S(G)$ for some finite group $G$ with Sylow
$p$-subgroup $S$, the transporter category $\T_\Delta(G)$ of $G$ with object
set $\Delta$ is the category with morphisms $\Mor_{\T_{\Delta}(G)}(P,Q) =
N_G(P,Q) = \{g \in G \mid { }^gP \leq Q\}$ where composition is given
by multiplication in $G$. There is an inclusion functor $\delta \colon
\T_\Delta(S) \to \T_{\Delta}(G)$, as well as a functor $\pi\colon \T_\Delta(G)
\to \F_S(G)$ which is the inclusion on objects and which sends $g \in N_G(P,Q)$
to $c_g \in \Hom_G(P,Q)$, conjugation by $g$.  This is the standard example of
a transporter system associated with $\F_S(G)$. 

\begin{definition}[{\cite[Definition~3.1]{OliverVentura2007}}]\label{D:trans}
A \emph{transporter system} associated with $\F$ is a nonempty category
$\T$ with object set $\Delta \subseteq \Ob(\F)$, together with structural
functors 
\[
\T_\Delta(S) \xra{\delta} \T \xra{\pi} \F
\] 
which satisfy the following axioms.
\begin{enumerate}
\item[(A1)] $\Delta$ is closed under $\F$-conjugacy and upon passing to
overgroups, $\delta$ is the identity on objects, and $\pi$ is the inclusion on
objects.
\item[(A2)] For each $P, Q \in \Delta$, the kernel 
\[
E(P) := \ker(\pi_{P,P}\colon \Aut_\T(P) \to \Aut_\F(P))
\]
acts freely on $\Mor_{\T}(P,Q)$ by right composition, and $\pi_{P,Q}$ is the
orbit map for this action. In particular, $\pi_{P,Q}$ is surjective. Also,
$E(Q)$ acts freely on $\Mor_{\T}(P,Q)$ by left composition.  Here,
$\Aut_\T(P)$ denotes $\Mor_{\T}(P,P)$. 
\item[(B)] For each $P,Q \in \Delta$, $\delta_{P,Q}\colon N_S(P,Q) \to
\Mor_{\T}(P,Q)$ is injective, and the composite $\pi_{P,Q} \circ \delta_{P,Q}$
sends $g \in N_S(P,Q)$ to $c_g \in \Hom_{\F}(P,Q)$. 
\item[(C)] For each $\phi \in \Mor_{\T}(P,Q)$ and each $g \in P$, the
diagram
\[
\xymatrix{
P  \ar[r]^\phi & Q \\
P \ar[u]^{\delta_{P,P}(g)} \ar[r]_\phi & Q \ar[u]_{\delta_{Q,Q}(\pi(\phi)(g))}
}
\]
commutes in $\T$.
\item[(I)] $\delta_{S,S}(S)$ is a Sylow $p$-subgroup of $\Aut_\T(S)$. 
\item[(II)] Let $\phi \in \Iso_{\T}(P,Q)$, $P \triangleleft \bar{P} \leq S$,
and $Q \triangleleft \bar{Q} \leq S$ be such that $\phi \circ
\delta_{P,P}(\bar{P}) \circ \phi^{-1} \leq \delta_{Q,Q}(\bar{Q})$. Then there
is $\bar{\phi} \in \Mor_{\T}(\bar{P},\bar{Q})$ such that $\bar{\phi} \circ
\delta_{P,\bar{P}}(1) = \delta_{Q,\bar{Q}}(1) \circ \phi$. 
\end{enumerate}
\end{definition}

From now on,  we abbreviate $\delta_{P,P}$ to $\delta_{P}$, $\pi_{P,P}$ to
$\pi_{P}$, and use similar notation when considering the application of an
arbitrary functor on morphism sets. Also, any future reference to axioms
(A1)-(II) should be interpreted as reference to the axioms given in
Definition~\ref{D:trans}. The following lemma collects some basic properties
of morphisms in a transporter system. 

\begin{lemma}
\label{L:resext}
Fix a transporter system $(\T, \delta, \pi)$ associated with $\F$. 
\begin{enumerate}
\item[(a)] Each morphism in $\T$ is both a monomorphism and an epimorphism
in the categorical sense.
\item[(b)] (Restrictions are unique) Given objects $P_0 \leq P$, $Q_0 \leq Q$, and
two morphisms $\phi_0$, $\phi_0'$ making the diagram
\[
\xymatrix{
P  \ar[r]^{\phi} & Q \\
P_0 \ar[u]^{\delta_{P_0,P}(1)} \ar[r]_{\phi_0, \phi_0'} & Q_0 \ar[u]_{\delta_{Q_0,Q}(1)}
}
\]
commute, one has $\phi_0 = \phi_0'$. 
\item[(c)] (Extensions are unique) Given objects $P_0 \leq P$, $Q_0 \leq Q$, and two
morphisms $\phi$, $\phi'$ making the diagram
\[
\xymatrix{
P  \ar[r]^{\phi, \phi'} & Q \\
P_0 \ar[u]^{\delta_{P_0,P}(1)} \ar[r]_{\phi_0} & Q_0 \ar[u]_{\delta_{Q_0,Q}(1)}
}
\]
commute, one has $\phi = \phi'$. 
\end{enumerate}
\end{lemma}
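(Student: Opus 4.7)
The three parts of the lemma are closely linked. Every morphism $\phi\colon P\to Q$ in $\T$ factors canonically as $\phi = \delta_{P^\ast,Q}(1)\circ\phi^{\mathrm{iso}}$, where $P^\ast = \pi(\phi)(P)\in\Delta$ (by (A1), as $P^\ast$ is $\F$-conjugate to $P$) and $\phi^{\mathrm{iso}}\in\Iso_\T(P,P^\ast)$ is obtained by lifting the corestriction of $\pi(\phi)$ via surjectivity of $\pi$ in axiom (A2). Granted this factorization, the monomorphism half of (a) reduces to showing that structural inclusions $\delta_{P^\ast,Q}(1)$ are monomorphisms---the content of (b)---and the epimorphism half reduces to showing they are epimorphisms---the content of (c). The plan is therefore to prove (b) first, derive the monomorphism half of (a), then prove (c) as the main step, and finally derive the epimorphism half of (a).

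Part (b) is immediate from (A2). Applying $\pi$ to $\delta_{Q_0,Q}(1)\circ\phi_0 = \delta_{Q_0,Q}(1)\circ\phi_0'$ and using that the underlying set map of $\pi(\delta_{Q_0,Q}(1))$ is the injective inclusion $Q_0\hookrightarrow Q$ gives $\pi(\phi_0) = \pi(\phi_0')$. By the orbit-map property in (A2), $\phi_0' = \phi_0\circ\epsilon$ for a unique $\epsilon\in E(P_0)$, and substituting back into the hypothesis together with freeness of the $E(P_0)$-action on $\Mor_\T(P_0,Q)$ forces $\epsilon = 1_{P_0}$. The monomorphism half of (a) follows by the same pattern: from $\phi\circ\psi_1 = \phi\circ\psi_2$, injectivity of the group homomorphism $\pi(\phi)$ yields $\pi(\psi_1) = \pi(\psi_2)$, and (A2) plus freeness conclude.

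The substantive content is (c), equivalently the claim that every structural inclusion $\delta_{P_0,P}(1)$ is an epimorphism. A naive attack via $\pi$ fails: from $\phi\circ\delta_{P_0,P}(1) = \phi'\circ\delta_{P_0,P}(1)$ one obtains only $\pi(\phi)|_{P_0} = \pi(\phi')|_{P_0}$, and a group homomorphism $P\to Q$ is not in general determined by its restriction to a proper subgroup. The plan is to invoke axiom (II) inductively along a normal series $P_0 = P^{(0)}\triangleleft P^{(1)}\triangleleft\cdots\triangleleft P^{(n)} = P$ in the $p$-group $P$. At each stage I factor the restriction of $\phi$ (and of $\phi'$) to $P^{(i)}$ as an isomorphism onto $\pi(\phi)(P^{(i)})\leq Q$ followed by a structural inclusion; axiom (II) then extends these iso factors from $P^{(i)}$ to $P^{(i+1)}$, while axiom (C) intertwines the conjugation action of elements of $P^{(i+1)}$ with the iso factor. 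Uniqueness of the extension at each stage, derived from (b) together with freeness in (A2) applied to isomorphisms, forces $\phi$ and $\phi'$ to agree on $P^{(i+1)}$. The main obstacle is executing this inductive step cleanly: one must verify that the normalization hypothesis of axiom (II), $\phi^{\mathrm{iso}}\circ\delta_{P^{(i)}}(P^{(i+1)})\circ(\phi^{\mathrm{iso}})^{-1}\leq \delta_{Q^{(i)}}(Q^{(i+1)})$ for a suitable chain $Q^{(0)}\triangleleft\cdots\triangleleft Q^{(n)}\leq Q$, holds at each stage, and one must control the ambiguity of $E(\cdot)$-actions without circularly relying on (c) itself.

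Once (c) is in hand, the epimorphism half of (a) is a quick consequence: given $\psi_1\circ\phi = \psi_2\circ\phi$, I factor $\phi = \delta_{P^\ast,Q}(1)\circ\phi^{\mathrm{iso}}$, cancel $\phi^{\mathrm{iso}}$ on the right (isomorphisms are trivially epi), and apply (c) to the resulting equality $\psi_1\circ\delta_{P^\ast,Q}(1) = \psi_2\circ\delta_{P^\ast,Q}(1)$, viewing $\psi_i$ as extensions of their common composite with the structural inclusion.
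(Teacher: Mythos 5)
The paper offers no proof of this lemma; it simply cites \cite[Lemma~3.2]{OliverVentura2007} for parts (a) and (b), and \cite[Lemma~A.5(c)]{Chermak2013} for part (c), so there is no in-house argument to compare against and your proposal must stand on its own.

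Your proofs of (b) and of the monomorphism half of (a) are correct, as is the derivation of the epimorphism half of (a) from (c) via the iso--inclusion factorization of a morphism. The gap is in (c), and while you partly flag it yourself, the obstruction you flag is fatal as the argument stands. Your inductive step rests on the claim that ``uniqueness of the extension at each stage [is] derived from (b) together with freeness in (A2),'' but those two facts cannot deliver it. If $\bar\phi_1,\bar\phi_2\colon P^{(i+1)}\to Q$ both restrict to $\phi_i\colon P^{(i)}\to Q^{(i)}$, axiom (A2) lets you write $\bar\phi_2=\bar\phi_1\circ\epsilon$ with $\epsilon\in E(P^{(i+1)})$ \emph{only after} you know $\pi(\bar\phi_1)=\pi(\bar\phi_2)$; what you actually know is only $\pi(\bar\phi_1)|_{P^{(i)}}=\pi(\bar\phi_2)|_{P^{(i)}}$, and a monomorphism of $p$-groups is not determined by its restriction to an index-$p$ subgroup (already $C_p\times 1\leq C_p\times C_p$ shows this), so (b) and freeness are silent. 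In other words the one-step uniqueness you need is precisely an instance of (c), and the induction is circular. Axiom (II) cannot rescue this: it is an existence statement, never a uniqueness statement. The substantive content of (c) is in forcing $\pi(\bar\phi_1)(g)=\pi(\bar\phi_2)(g)$ for $g$ generating $P^{(i+1)}$ over $P^{(i)}$, and the tool that does this is axiom (C): after composing with $\delta_{P^{(i)},P^{(i+1)}}(1)$ and using normality of $P^{(i)}$, axiom (C) shows $\delta_Q(\pi(\bar\phi_1)(g))$ and $\delta_Q(\pi(\bar\phi_2)(g))$ have the same left action on the common restricted morphism, and one must then work to conclude the two $\delta_Q$-elements agree. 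That is the hard part of Chermak's Lemma~A.5(c), and your sketch does not do it; you mention (C) in passing but attribute the conclusion to (b) and (A2), which is where the proof breaks.
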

\begin{proof}
Parts (a) and (b) are contained in \cite[Lemma~3.2]{OliverVentura2007}, while
part (c) is proved in \cite[Lemma~A.5(c)]{Chermak2013}.
\end{proof}

By a morphism of fusion systems $\F \to \F'$, it is meant a pair
$(\alpha,\Phi)$ where $\alpha \colon S \to S'$ is a group homomorphism and
$\Phi\colon\F \to \F'$ is a functor which together satisfy $\alpha(P) =
\Phi(P)$ on objects and $\Phi(\phi) \circ \alpha = \alpha \circ \phi$ for each
morphism $\phi$ in $\F$. If $\alpha$ is an isomorphism, then $\Phi$ is
determined uniquely by $\alpha$. So an isomorphism of fusion systems may be
regarded as an isomorphism of the underlying $p$-groups which ``preserves
fusion''.

\begin{definition}[Isomorphisms of transporter systems] 
\label{D:iso-trans}
Let $(\T, \delta, \pi)$ and $(\T', \delta', \pi')$ be transporter systems with
object sets $\Delta$ and $\Delta'$, for the saturated fusion systems $\F$ and
$\F'$, respectively.
\begin{enumerate}
\item Let $\alpha \colon \T \to \T'$ be an equivalence of categories. It
is said that
\begin{itemize}
\item $\alpha$ is \emph{isotypical} if $\alpha(\delta_P(P)) =
\delta'_{\alpha(P)}(\alpha(P))$ for each subgroup $P \in \Delta$, and that
\item $\alpha$ \emph{sends inclusions to inclusions} if
$\alpha(\delta_{P,Q}(1)) = \delta'_{\alpha(P), \alpha(Q)}(1)$ for each $P, Q
\in \Delta$. 
\end{itemize}
\item An \emph{isomorphism} is an equivalence $\T \to \T'$ which is isotypical
and sends inclusions to inclusions.  An automorphism is an isomorphism of
a transporter system onto itself. 
\item An isomorphism $\alpha \colon \T \to \T'$ is said to be \emph{rigid} if
$S = S'$ and $\alpha_S \circ \delta_S = \delta'_S$ as homomorphisms $S \to
\Aut_{\T'}(S)$. Here, as before, $\alpha_S$ means $\alpha_{S,S}$.
\item An automorphism $\alpha$ of $\T$ is \emph{inner} if there is an element
$\phi \in \Aut_\T(S)$ such that $\alpha$ is given on objects by $P \mapsto
c_\phi(P) := \pi(\phi)(P)$ and on morphisms by mapping $\psi\colon P \to Q$
to 
\[
c_{\phi}(\psi) := \phi|_{Q,c_\phi(Q)} \circ \psi \circ (\phi|_{P,c_{\phi}(P)})^{-1},
\]
where, for example, $\phi|_{Q,c_{\phi}(Q)}$ is the unique morphism from $Q$
to $c_{\phi}(Q)$ in $\T$ such that $\phi \circ \delta_{Q,S}(1) =
\delta_{c_{\phi}(Q), S}(1) \circ \phi$, as given by
Lemma~\ref{L:resext}(b). We refer to $c_{\phi}$ as \emph{conjugation by
$\phi$}. 
Write $\Aut_{Z(S)}(\T)$ for the group of rigid inner automorphisms of $\T$
which are conjugation by elements of $\delta_S(Z(S)) \leq \Aut_\T(S)$. 
\end{enumerate}
Denote by $\Aut(\T) := \Aut(\T,\delta,\pi)$ the group of automorphisms of $\T$.
Denote by $\mathsf{T}$ the category of transporter systems and isomorphisms. 
\end{definition}

\begin{remark}
An isomorphism of transporter systems is in particular an invertible functor,
and so one sees that $\Aut(\T)$ is indeed a group.  This was shown for linking
systems in \cite[Lemma~1.14(a)]{AOV2012}, and the same argument applies for an
arbitrary transporter system.  
\end{remark}

We have defined isomorphism here in analogy with the definition of an
automorphism of a centric linking system
\cite[III.4.3]{AschbacherKessarOliver2011}, but more generally than is usually
done.  The usual definition of an isomorphism of transporter systems is a
functor $\alpha\colon \T \to \T'$ which commutes with the structural functors:
$\alpha \circ \delta = \delta'$ and $\pi' \circ \alpha = \pi$. See for example
\cite[p.799]{BrotoLeviOliver2003}, \cite[Proposition~3.11]{OliverVentura2007},
\cite[p.146]{AschbacherKessarOliver2011}, or
\cite[Definition~A.2]{Chermak2013}.  Rather, Definition~\ref{D:iso-trans}
specializes to the definition of an automorphism of a linking system in
\cite[Section III.4.3]{AschbacherKessarOliver2011}.  

The following proposition extends Proposition~4.11 of
\cite{AschbacherKessarOliver2011} in two ways, but the proof follows the same
basic outline.  It helps explain that an isomorphism between transporter
systems is equivalent to a triple of functors commuting with the structural
functors, and that the usual definition of isomorphism of transporter systems
is the same as what we are calling a rigid isomorphism.

\begin{proposition}\label{P:aut-trans}
Fix transporter systems $(\T, \delta, \pi)$ and $(\T', \delta', \pi')$
associated to $\F$ and $\F'$ with object sets $\Delta$ and $\Delta'$ which
contain $\F^{cr}$ and ${\F'}^{cr}$. Given an isomorphism $\alpha \colon \T \to
\T'$ in the sense of Definition~\ref{D:iso-trans}, there is a unique associated
isomorphism $\beta \colon S \to S'$, a unique functor $\beta_* \colon
\T_\Delta(S) \to \T_{\Delta'}(S')$, and a unique isomorphism $c_\beta\colon
\colon \F \to \F'$ of fusion systems such that the diagram
\begin{eqnarray}
\label{E:diag-isom-trans}
\vcenter{
\xymatrix{
\T_\Delta(S) \ar[r]^\delta \ar[d]^{\beta_*} & \T \ar[r]^\pi \ar[d]^\alpha & \F \ar[d]^{c_{\beta}}\\
\T_{\Delta'}(S') \ar[r]^{\delta'}  & \T' \ar[r]^{\pi'} & \F'.
}
}
\end{eqnarray}
commutes and $\beta = (\beta_*)_{S}$. Moreover, $\alpha$ is a rigid
isomorphism if and only if both $\beta_*$ and $c_{\beta}$ are the identity
functors. 
\end{proposition}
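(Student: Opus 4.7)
\emph{Overall plan.} The idea is to extract $\beta\colon S \to S'$ directly from $\alpha$ via the structural functors, to promote it to a functor $\beta_*$ on transporter subcategories, and to descend $\alpha$ down to $\F$ using surjectivity of $\pi$. The one substantive step is to check that isotypicality together with inclusion-preservation forces $\alpha(P) = \beta(P)$ on objects of $\Delta$; everything else is either formal or a standard invocation of saturated fusion-system machinery.

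\emph{Producing $\beta$ and identifying $\alpha$ on objects.} First I would show $\alpha(S) = S'$. Essential surjectivity of $\alpha$ provides $P_0 \in \Delta$ and an isomorphism $\alpha(P_0) \cong S'$ in $\T'$; any such isomorphism projects under $\pi'$ to a group isomorphism, so $|\alpha(P_0)| = |S'|$, and together with $\alpha(P_0) \leq S'$ this gives $\alpha(P_0) = S'$. Applying $\alpha$ to the inclusion $\delta_{P_0,S}(1)$ then exhibits $S' \leq \alpha(S) \leq S'$, so $\alpha(S) = S'$. Isotypicality at $S$ yields $\alpha_S(\delta_S(S)) = \delta'_S(S')$, and by injectivity of $\delta'_S$ the equation
\[
\alpha_S \circ \delta_S \;=\; \delta'_S \circ \beta
\]
defines a unique group isomorphism $\beta\colon S \to S'$. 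To upgrade this to $\alpha(P) = \beta(P)$ for each $P \in \Delta$, I would apply $\alpha$ to the axiom-(C) identity $\delta_{P,S}(1)\circ\delta_P(g) = \delta_S(g)\circ\delta_{P,S}(1)$ (coming from $\pi(\delta_{P,S}(1))$ being the inclusion $P \hookrightarrow S$). Using inclusion-preservation and the defining equation of $\beta$, this becomes
\[
\delta'_{\alpha(P),S'}(1) \circ \alpha(\delta_P(g)) \;=\; \delta'_S(\beta(g)) \circ \delta'_{\alpha(P),S'}(1).
\]
Isotypicality supplies $h \in \alpha(P)$ with $\alpha(\delta_P(g)) = \delta'_{\alpha(P)}(h)$; substituting and applying axiom (C) in $\T'$ to the inclusion, then cancelling the epimorphism $\delta'_{\alpha(P),S'}(1)$ via Lemma~\ref{L:resext}(a), forces $h = \beta(g)$. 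Hence $\beta(P) \leq \alpha(P)$, and equality follows from $|\alpha(P)| = |P|$ (an immediate consequence of isotypicality). As a byproduct one gets $\alpha(\delta_P(g)) = \delta'_{\beta(P)}(\beta(g))$ for all $g \in P$.

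\emph{Constructing $\beta_*$ and $c_\beta$.} With this in hand, the formulas $\beta_*(P) := \beta(P)$ and $\beta_*(g) := \beta(g)$ define a functor $\T_\Delta(S) \to \T_{\Delta'}(S')$, and the decomposition $\delta_{P,Q}(g) = \delta_Q(g) \circ \delta_{P,Q}(1)$ combined with the previous paragraph makes the left square of \eqref{E:diag-isom-trans} commute; injectivity of $\delta'_{\beta(P),\beta(Q)}$ then forces both $\beta$ and $\beta_*$ to be unique. For $c_\beta$: given $\phi\colon P \to Q$ with $P,Q \in \Delta$, lift via (A2) to $\psi \in \Mor_\T(P,Q)$ with $\pi(\psi) = \phi$; applying $\alpha$ to the (C)-identity $\psi\circ\delta_P(g) = \delta_Q(\pi(\psi)(g))\circ\psi$ and invoking the previous paragraph gives $\pi'(\alpha(\psi)) = \beta \circ \phi \circ \beta^{-1}$. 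This is therefore independent of the lift $\psi$ and lies in $\F'$, making the right square commute on $\Delta$-morphisms. To extend $c_\beta$ to all of $\F$, I would appeal to Alperin's fusion theorem \cite[Theorem~I.3.5]{AschbacherKessarOliver2011}: since $\F^{cr} \subseteq \Delta$, every morphism of $\F$ is a composition of restrictions of $\F$-automorphisms of $\F^{cr}$-subgroups with inclusions, and conjugation by $\beta$ is compatible with restriction and composition, so the formula $\phi \mapsto \beta\phi\beta^{-1}$ yields a well-defined functor $c_\beta\colon \F \to \F'$. Uniqueness of $c_\beta$ comes from the same surjectivity of $\pi$ on $\Delta$-morphisms.

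\emph{Rigidity characterization.} If $\beta_*$ and $c_\beta$ are identity functors, then $\beta = (\beta_*)_S = \id_S$, so the defining equation for $\beta$ reduces to $\alpha_S \circ \delta_S = \delta'_S$, i.e., $\alpha$ is rigid. Conversely, rigidity of $\alpha$ forces $\beta = \id_S$ by uniqueness of $\beta$, after which $\beta_*$ and $c_\beta$ reduce to identity functors directly from their defining formulas. The main obstacle throughout is the identification $\alpha(P) = \beta(P)$ in the second paragraph: it is what turns the a priori opaque action of $\alpha$ on objects into something explicit in terms of $\beta$, and it is the one point where the three hypotheses (equivalence of categories, isotypicality, and inclusion-preservation) must be woven together with axiom (C) and Lemma~\ref{L:resext}.
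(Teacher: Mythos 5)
Your proposal is correct and follows essentially the same route as the paper's proof: define $\beta$ from the isotypicality condition at $S$, show $\alpha(P) = \beta(P)$ on objects via inclusion-preservation and axiom (C), descend to $c_\beta$ using surjectivity of $\pi$ together with axiom (C) and the epi/injectivity facts, and invoke the Alperin--Goldschmidt fusion theorem (via $\F^{cr} \subseteq \Delta$) to pin down $c_\beta$ on all of $\F$. The only notable differences are cosmetic: you use essential surjectivity plus $\pi'$ to show $\alpha(S) = S'$ where the paper uses the uniqueness characterization of $S$ as the object receiving a morphism from every object, and you spell out the $\alpha(P)=\beta(P)$ step in more granular detail (also handling the extension of $c_\beta$ to non-$\Delta$ morphisms a bit more explicitly than the paper does).
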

\begin{proof}
Let $\alpha \colon \T \to \T'$ be an isomorphism. As $S$ is the only object of
$\T$ with the property that $\Mor_\T(P,S) \neq \varnothing$ for each object $P$
of $\T$, and the same is true for $S'$ with respect to $\T'$, it follows that
$\alpha(S) = S'$. So $\alpha_S(\delta_S(S)) = \delta'_{S'}(S')$ since $\alpha$
is isotypical.  By axiom (B) for a transporter system, the map $\delta'_{S'}
\colon S' \to \delta'_{S'}(S')$ is an isomorphism, so there is a unique map
$\beta$ from $S = \Aut_{\T_\Delta(S)}(S)$ to $S' = \Aut_{\T_{\Delta'}(S')}(S')$
such that
\begin{eqnarray} \label{E:betadef} \alpha_S(\delta_S(s)) =
\delta'_{S'}(\beta(s)) 
\end{eqnarray}
for each $s \in S$. Then $\beta = (\delta')_{S'}^{-1} \circ \alpha_S \circ
\delta_S$ is an isomorphism from $S$ to $S'$. Now $\alpha$ sends inclusions to
inclusions, so commutes with restrictions. Hence, for each $P \in
\Delta$, as $\alpha(\delta_P(P)) = \delta'_{\alpha(P)}(\alpha(P))$, we have
$\alpha_S(\delta_S(P)) = \delta'_{S'}(\alpha(P))$, and this shows with
\eqref{E:betadef} and injectivity of $\delta'$ that $\beta(P) = \alpha(P)$ for
each $P$. 

Let $\beta_* \colon \T_\Delta(S) \to \T_{\Delta'}(S')$ be the functor induced
by $\beta$. Namely, $\beta_*$ sends an object $P$ to $\beta(P)$, and it sends a
morphism $P \xra{s} Q$ to $\beta(P) \xra{\beta(s)} \beta(Q)$. Then $\delta'
\circ \beta_* = \alpha \circ \delta$ by construction. 

Next, we wish to define a functor $c_\beta\colon \F \to \F'$ via
a mapping on objects sending $P$ to $\beta(P)$, and on morphisms sending $P
\xra{\phi} Q$ to $\beta(P) \xra{\beta \circ \phi \circ \beta^{-1}} \beta(Q)$.
This is an isomorphism of fusion systems (the one corresponding to the
isomorphism $\beta$ from $S$ to $S'$) with inverse $c_{\beta^{-1}}$, if
well-defined.  In order to show the assignment is well-defined, we must prove
that each $\beta \circ \phi \circ \beta^{-1}$ is a morphism in $\F'$. This will
be done by showing that $c_\beta(\phi) = \pi'(\alpha(\tilde{\phi}))$ for each
$\tilde{\phi} \in \Mor_\T(P,Q)$ with $\pi(\tilde{\phi}) = \phi$, thus
simultaneously showing that the right square in \eqref{E:diag-isom-trans} commutes. 

Fix such a lift $\tilde{\phi}$ of $\phi$, and let $s \in P$. Consider the
following diagrams:
\[
\xymatrix{
P \ar[r]^{\tilde{\phi}} \ar[d]_{\delta_P(s)} & Q \ar[d]^{\delta_Q(\phi(s))\,,}\\
P \ar[r]_{\tilde{\phi}} & Q\\
}
\qquad 
\xymatrix{
\alpha(P) \ar[r]^{\alpha(\tilde{\phi})} \ar[d]_{\alpha(\delta_P(s))} & \alpha(Q) \ar[d]^{\alpha(\delta_Q(\phi(s)))\,,}\\
\alpha(P) \ar[r]_{\alpha(\tilde{\phi})} & \alpha(Q) \\
}
\qquad 
\xymatrix{
\beta(P) \ar[r]^{\alpha(\tilde{\phi})} \ar[d]_{\delta'_{\beta(P)}(\beta(s))} & \beta(Q) \ar[d]^{\delta'_{\beta(Q)}(\beta(\phi(s)))}\\
\beta(P) \ar[r]_{\alpha(\tilde{\phi})} & \beta(Q) \\
}
\]
By axiom (C) for $\T$, the first diagram commutes, and the second is
$\alpha$ applied to the first. As shown above, $\beta(P) = \alpha(P)$ and
$\alpha \circ \delta = \delta' \circ \beta_*$, so the third diagram is the same
as the second. By axiom (C) for $\T'$ with $\alpha(\tilde{\phi})$ and
$\beta(s)$ in the roles of $\phi$ and $g$, the morphism
$\delta'_{\beta(Q)}(\pi'(\alpha(\tilde{\phi}))(\beta(s)))$ in place of
$\delta'_{\beta(Q)}(\beta(\phi(s)))$ also makes the third diagram commute, so
we have
\[
\delta'_{\beta(Q)}(\beta(\phi(s))) \circ \alpha(\tilde{\phi})=
\delta'_{\beta(Q)}(\pi'(\alpha(\tilde{\phi}))(\beta(s))) \circ \alpha(\tilde{\phi}) 
\]
as morphisms between $\beta(P)$ and $\beta(Q)$ in $\T$.  Since each morphism in
a transporter system is an epimorphism (Lemma~\ref{L:resext}(a)) and
$\delta'_{\beta(Q)}$ is injective (axiom (B)), it follows that
\[
\beta(\phi(s)) = \pi'(\alpha(\tilde{\phi}))(\beta(s)), \quad \text{ for $s \in P$}.
\]
Hence, after replacing $s$ by $\beta^{-1}(s)$, we see that $c_\beta(\phi) =
\pi'(\alpha(\tilde{\phi}))$ as claimed, and this completes the proof of
existence of the functors $\beta_*$ and $c_{\beta}$. 

It remains to prove uniqueness. Observe that uniqueness of $\beta$ would follow
from that of $\beta_*$. Suppose $\gamma \colon \T_\Delta(S) \to
\T_{\Delta'}(S')$ is a functor such that $\gamma$ in place of $\beta^*$
makes the left square in \eqref{E:diag-isom-trans} commute.  Since $\delta$
and $\delta'$ are the identity on objects by axiom (A1), $\gamma$ agrees with
$\beta_*$ on objects.  Similarly they agree on morphisms, given commutativity
of the diagram, since ${\delta'}_{P,Q}$ is injective by axiom (B) for each $P,Q
\in \Delta$. Hence, $\gamma = \beta_*$.  Next, suppose in addition that $\eta
\colon \F \to \F'$ is another functor such that right square in
\eqref{E:diag-isom-trans} commutes with $\eta$ in place of $c_{\beta}$. By
axiom (A1), the functors $c_\beta$ and $\eta$ agree with $\alpha$ on the
objects $\Delta$. For each morphism $\phi$ in $\T$ between subgroups in
$\Delta$, we have $\eta(\pi(\phi)) = c_\beta(\pi(\phi))$, so by axiom (A2) on
the surjectivity of $\pi$ on morphism sets, we see that $\eta$ and $c_{\beta}$
agree on morphisms in $\F$ between subgroups in $\Delta$. By assumption
$\F^{cr} \sseq \Delta$, so the Alperin-Goldschmidt fusion theorem
\cite[Proposition~A.10]{BrotoLeviOliver2003} or
\cite[I.3.5]{AschbacherKessarOliver2011} gives equality. 

If $\alpha$ is a rigid isomorphism, then by definition $S = S'$. By
commutativity of the left square in \eqref{E:diag-isom-trans}, $\delta'_S
\circ \beta = \alpha_S \circ \delta_S = \delta'_S$. So $\beta = \id_S$ as
$\delta'_S$ is injective.  It was shown above that $\beta_*$ and $c_{\beta}$
are uniquely determined by $\beta$, so $\beta_*$ and $c_{\beta}$ are the
identity.  Conversely, if $\beta_*$ is the identity functor, then $S = S'$, and
by commutativity of the left square, we have $\alpha_S\circ \delta_S =
\delta'_S \circ \id_S = \delta'_S$, so $\alpha$ is rigid.
\end{proof}

As in the setting of (quasicentric) linking systems \cite[p.197]{AOV2012},
one can define a group homomorphism relating automorphisms of a transporter
system with automorphisms of the associated fusion system in this more general
setting, using Proposition~\ref{P:aut-trans}.  Let $(\T,\delta,\pi)$ be a
transporter system with object set $\Delta$ associated with the saturated
fusion system $\F$ on $S$. Assume that $\F^{cr} \subseteq \Delta$. Define 
\[
\tilde{\mu}_{\T}\colon \Aut(\T) \to \Aut(\F)
\]
to be the map which sends $\alpha \in \Aut(\T)$ to the automorphism
$\delta_S^{-1} \circ \alpha_S \circ \delta_S$ of $S = \Aut_{\T_\Delta(S)}(S)$.
Thus, $\tilde{\mu}_\T(\alpha)$ is the automorphism $\beta$ in
Proposition~\ref{P:aut-trans}. This is a group homomorphism (using uniqueness
of $c_\beta$) which maps $\Aut_\T(S)$ onto $\Aut_\F(S)$ and has
kernel $\Aut_0(\T)$. It induces a homomorphism
\[
\mu_\T \colon \Out(\T) \to \Out(\F)
\]
with kernel $\Out_0(\T)$.  When $\T = \T_\Delta(G)$ for some finite group $G$
with Sylow $p$-subgroup $S$, we sometimes write $\tilde{\mu}_G$ for
$\tilde{\mu}_\T$ and $\mu_G$ for $\mu_\T$, provided $\T$ is understood from
the context. 

\subsection{Localities}
In his proof of the existence and uniqueness of centric linking systems,
Chermak introduced localities and showed in \cite[Appendix]{Chermak2013} they
are essentially equivalent to transporter systems.  The purpose of this section
is to explain how Chermak's results give an equivalence of categories between
transporter systems and localities, with morphisms isomorphisms, while setting
up notation.

Let $\L$ be a finite set (we shall consider only finite
localities).  Write $\bW(\L)$ for the monoid of words $(f_n,\dots,f_1)$ in
the elements of $\L$, where the multiplication is concatenation $\circ$.  A
\emph{partial group} is a set $\L$ together with a subset $\bD := \bD(\L)
\subseteq \bW(\L)$, a multivariable product $\Pi\colon \bD \to \L$ defined on
words in $\bD$, and an inversion map $(-)^{-1}\colon \L \to \L$, subject to
certain axioms which may be found in \cite[Definition~2.1]{Chermak2013}. The
product $f_n \cdots f_1$ is \emph{defined} if $(f_n,\dots,f_1) \in \bD$, and in
this case we set $f_n \cdots f_1 = \Pi(f_n,\dots,f_1)$.  A partial group is a
group if and only if $\bD = \bW(\L)$, that is, all products are defined. A
\emph{partial subgroup} is a subset $\L_0$ of $\L$ with domain $\bD_0 \subseteq
\bW(\L_0) \cap \bD$, such that the restriction of the product $\Pi$ to $\bD_0$
is the product $\Pi_0$ for $\L_0$.  The subgroups of $\L$ are the partial
subgroups $\L_0$ with $\bW(\L_0) \subseteq \bD(\L)$. A \emph{homomorphism} of
partial groups is a function $\gamma \colon \L \to \M$ such that
$\gamma^*(\bD(\L)) \subseteq \bD(\M)$ and $\Pi(\gamma^*(w)) = \gamma(\Pi(w))$
for any word $w \in \bD(\L)$. Here, $\gamma^* \colon \bW(\L) \to \bW(\M)$ is
the map on words determined by $\gamma$. Partial groups and partial group
homomorphisms form a category, so there is the usual notion of isomorphism in
this category. A homomorphism $\gamma$ as above is an isomorphism if and only
if it is a bijective homomorphism satisfying $\gamma^*(\bD(\L)) = \bD(\M)$.

There is a natural notion of conjugation in a partial group when defined.
Given $f \in \L$, write $\bD(f)$ for the set of $x \in \L$ such that
$(f,x,f^{-1}) \in \bD$. The product $fxf^{-1} = \Pi(f,x,f^{-1})$ is the
conjugate of $x$ by $f$, sometimes written ${ }^fx$. A usual convention, which
we adopt, is that any such expression carries the tacit assumption that $x \in
\bD(f)$. Likewise, for any subset $X \subseteq \L$, the expression ${ }^f X$
has a similar meaning, including that $X \subseteq \bD(f)$. 

\begin{definition}\label{D:locality}
Let $\L$ be a finite partial group, let $S$ be a $p$-subgroup of $\L$, and let
$\Delta$ be a collection of subgroups of $S$. The triple $(\L, \Delta,S)$ is a
\emph{locality} if
\begin{enumerate}
\item[(L1a)] $\bD(\L)$ is equal to the set of those $(f_n,\dots,f_1) \in \bW(\L)$ such
that there is $(X_0,\dots,X_n) \in \bW(\Delta)$ with ${ }^{f_{i+1}}X_i =
X_{i+1}$ for each $0 \leq i < n$.
\item[(L1b)] If $P \in \Delta$ and $f \in \L$ with $P \leq \bD(f)$ and ${ }^fP \leq
S$, then $Q \in \Delta$ for each ${ }^fP \leq Q \leq S$. 
\item[(L2)] $S$ is a maximal member of the poset of $p$-subgroups of $\L$. 
\end{enumerate}
\end{definition}

We next set up some notation when working with a locality $(\L,\Delta,S)$.
A word $w = (f_n,\dots,f_1) \in \bW(\L)$ is \emph{in $\bD(\L)$ via $X_0$}
if ${}^{f_i \cdots f_1}X_{0} \in \Delta$ for each $1 \leq i \leq n$, compare
(L1a). For $f \in \L$, denote by $S_f$ the set of $s \in S$ such that ${ }^fs
\in S$. By \cite[Proposition~2.11]{Chermak2013}, $S_f \in \Delta$. In
particular, $S_f$ is a subgroup of $\L$ which plays the role of a Sylow
intersection.  For an object $P \in \Delta$, the normalizer $N_\L(P) = \{f \in
\L \mid { }^fP = P\}$,  and centralizer $C_\L(P) = \{f \in \L \mid { }^fx = x
\text{ for all $x \in P$}\}$ are subgroups of $\L$.

The \emph{fusion system} $\F_S(\L)$ of $\L$ is the fusion system on $S$ with
morphisms being those group monomorphisms between subgroups of $S$
which can be written as compositions of restrictions of the conjugation
homomorphisms $c_f \colon P \to Q$, $x \mapsto { }^fx$ between objects $P,Q \in
\Delta$. It is said that $\L$ is a locality \emph{on} $\F_S(\L)$. 

\begin{example}[{\cite[Example/Lemma~2.10]{Chermak2013}}]\label{Ex:locality}
Let $G$ be a finite group, let $S$ be a Sylow $p$-subgroup of $G$, and let
$\Delta$ be a collection of subgroups of $S$ which is closed under
$\F_S(G)$-conjugacy and upon passing to overgroups, and which contains
all $\F_S(G)$-centric radical subgroups.  Let $\L$ be the subset of $G$
consisting of those $g \in G$ such that there exists $P \in \Delta$ with ${
}^gP \leq S$ (so that ${ }^gP \in \Delta$). Let $\bD \subseteq \bW(\L)$ denote
the collection of all words $(g_n,\dots,g_1) \in \bW(\L)$ such that there is
$(X_0,\dots,X_n) \in \bW(\Delta)$ with ${ }^{g_i \cdots g_1}X_0 \in \Delta$ for
each $0 \leq i \leq n$.  Whenever $(g_n,\dots,g_1)$ is a word in $\bD$, define
$\Pi(g_n,\dots,g_1) = g_n\cdots g_1$, the product in $G$. Then
$(\L,\Delta,S)$ is a locality on $\F_S(G)$, written $\L_{\Delta}(G)$. 
\end{example}

\begin{definition}[Isomorphisms of localities]
Let $(\L, \Delta,S)$ and $(\L',\Delta',S')$ be localities. 
\begin{enumerate}
\item An isomorphism from $(\L,\Delta,S)$ to $(\L',\Delta',S')$ is an
isomorphism of partial groups $\beta\colon \L \to \L'$ such that $\beta(\Delta)
= \Delta'$ (hence, $\beta(S) = S'$). An automorphism of $(\L,\Delta,S)$ is an
isomorphism of $(\L,\Delta,S)$ to itself.
\item An isomorphism $\beta$ is \emph{rigid} if $S = S'$, and $\beta$ is the
identity on $S$. 
\item An automorphism $\alpha$ of $\L$ is \emph{inner} if it is given by
conjugation by an element of $N_\L(S)$, namely, there is $f \in N_\L(S)$ such
that $\alpha(x) = fxf^{-1}$ for all $x \in \L$. (Note that the product
$fxf^{-1}$ is always defined when $f \in N_\L(S)$.) 
\end{enumerate}
Write $\Aut(\L) := \Aut(\L,\Delta,S)$ for the group of automorphisms of $\L$,
$\Aut_0(\L)$ for the subgroup of rigid automorphisms, and $\Aut_{Z(S)}(\L)$ for
the subgroup of $\Aut_0(\L)$ consisting of automorphisms which are conjugation
by elements in $Z(S)$.  Denote by $\mathsf{L}$ the category of localities
with isomorphisms.
\end{definition}

\subsection{Equivalence between transporter systems and localities}

In \cite[Appendix]{Chermak2013}, Chermak goes most of the way toward proving
that there is an equivalence between the category of transporter systems with
rigid isomorphisms (in the sense of Definition~\ref{D:iso-trans}) and the
category of localities with rigid isomorphisms.  Here, we suggest a mild
extension of Chermak's results to an equivalence of the slightly larger
categories $\mathsf{T}$ and $\mathsf{L}$ with the same objects. First, we
briefly review how to pass from a locality to a transporter system and vice
versa. More details are given in \cite[Appendix A]{Chermak2013}.

\subsubsection{From localities to transporter systems}\label{sss:loc-to-trans}
Given a locality $(\L,\Delta,S)$, one can make a transporter system
$(\T_\Delta(\L),\delta,\pi)$ associated with $\F_S(\L)$ in the following
way. Let $\T_{\Delta}(\L)$ have object set $\Delta$, and for each $P,Q \in
\Delta$, take 
\[
\Mor_{\T_{\Delta}(\L)}(P,Q) = \{(f,P,Q) \mid { }^fP \leq Q\}.
\]
Composition is given by multiplication in $\L$. The functor $\delta$ is the
identity on objects, and sends $P \xra{s} Q$ to $(s,P,Q)$. The functor $\pi$ is
the inclusion on objects and sends $(f,P,Q)$ to the conjugation homomorphism
$c_f \colon P \to Q$. 

\subsubsection{From transporter systems to localities}\label{sss:trans-to-loc}
Conversely, to make a locality given a transporter system $(\T,\delta,\pi)$,
consider the collection of isomorphisms $\Iso(\T)$ in $\T$ and the following
relation on the set $\Mor(\T)$ of morphisms in $\T$: the morphism $\phi\colon P
\to Q$ is an extension of $\phi_0 \colon P_0 \to Q_0$, written $\phi_0 \uparrow
\phi$, if the diagram
\[
\xymatrix{
P \ar[r]^{\phi} & Q\\
P_0 \ar[u]^{\delta_{P_0,P}(1)} \ar[r]_{\phi_0} & Q_0 \ar[u]_{\delta_{Q_0,Q}(1)}
}
\]
commutes in $\T$. This is a partial order, and the equivalence relation on
$\Iso(\T)$ generated by its restriction to $\Iso(\T)$ is denoted $\equiv$.  It
is shown in \cite[Lemma~A.8(a)]{Chermak2013} that each $\equiv$-class has a
unique maximal member with respect to $\uparrow$. Write $[\phi]$ for the
equivalence class of $\phi$, and set $(\L,\Delta,S) = (\Iso(\T)/\!\!\equiv,
\Delta, S)$, where by abuse of notation, $S$ is identified with the set of
equivalence classes $\{[\delta_S(s)] \mid s \in S\}$ of elements in
$\delta_S(S) \subseteq \Aut_\T(S) \subseteq \Iso(\T)$. The domain
$\bD(\L_{\Delta}(\T))$ for the product is the set of all words $(f_n,\dots,f_1)
\in \bW(\L_{\Delta}(\T))$ such that there exist objects $P_0,\dots,P_n \in
\Delta$ and isomorphisms $\phi_i \colon P_{i-1}\to P_i$ in $\T$ such that
$\phi_i \in f_i$ for each $i$. In this situation, the product $\Pi \colon
\bD(\L_{\Delta}(\T)) \to \L_{\Delta}(\T)$ is defined by $\Pi(f_n,\dots,f_1) =
[\phi_n \circ \cdots \circ \phi_1]$.

Recall that $\mathsf{T}$ denotes the category of transporter systems with
isomorphisms and $\mathsf{L}$ denotes the category of localities with
isomorphisms. We write $\mathsf{T}_0$ and $\mathsf{L}_0$ for the categories of
transporter systems and localities with rigid isomorphisms.

\begin{theorem}[{cf. Chermak \cite[Appendix]{Chermak2013}}]
\label{T:local-equiv-trans}
The categories $\mathsf{T}$ and $\mathsf{L}$ are equivalent via a functor which
restricts to an equivalence between $\mathsf{T}_0$ and
$\mathsf{L}_0$. 
\end{theorem}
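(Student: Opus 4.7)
The plan is to promote Chermak's equivalence between the rigid subcategories $\mathsf{T}_0$ and $\mathsf{L}_0$ to an equivalence $\mathsf{T} \simeq \mathsf{L}$ by defining the two functors on the broader morphism sets and checking naturality. On objects I would set $F(\L,\Delta,S) = (\T_\Delta(\L), \delta, \pi)$ and $G(\T,\delta,\pi) = (\L_\Delta(\T), \Delta, S)$ via the constructions recalled in Sections~\ref{sss:loc-to-trans} and \ref{sss:trans-to-loc}. On morphisms, given $\beta\colon (\L,\Delta,S) \to (\L',\Delta',S')$ in $\mathsf{L}$, define $F(\beta)\colon \T_\Delta(\L) \to \T_{\Delta'}(\L')$ by $P \mapsto \beta(P)$ and $(f,P,Q) \mapsto (\beta(f),\beta(P),\beta(Q))$. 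Given $\alpha\colon \T \to \T'$ in $\mathsf{T}$, define $G(\alpha)\colon \L_\Delta(\T) \to \L_{\Delta'}(\T')$ by $[\phi] \mapsto [\alpha(\phi)]$.

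Next I would verify that $F(\beta)$ and $G(\alpha)$ are indeed morphisms in the target categories. For $F(\beta)$, functoriality and bijectivity are immediate from $\beta$ being a partial group isomorphism; isotypicality holds because $\beta|_S$ is a group isomorphism with $\beta(\Delta) = \Delta'$, so $\beta$ carries $\delta_P(P)$ to $\delta'_{\beta(P)}(\beta(P))$; and sending inclusions to inclusions follows since $\beta(1) = 1$. For $G(\alpha)$, well-definedness on $\equiv$-classes is the key point: because $\alpha$ is a functor that sends inclusions to inclusions, it carries the commutative extension squares defining the partial order $\uparrow$ to analogous squares in $\T'$, and hence respects the equivalence relation generated by $\uparrow$ on $\Iso(\T)$. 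Bijectivity of $G(\alpha)$ follows from $G(\alpha^{-1})$ being an inverse, while the partial group homomorphism axioms reduce to showing that a composable chain of isomorphisms in $\T$ witnessing a word $(f_n,\dots,f_1) \in \bD(\L_\Delta(\T))$ maps under $\alpha$ to a chain in $\T'$ witnessing the image word in $\bD(\L_{\Delta'}(\T'))$, which is immediate since $\alpha$ is an equivalence of categories.

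The natural isomorphisms $\eta_\L \colon \L \to GF(\L)$ and $\epsilon_\T \colon \T \to FG(\T)$ are the canonical identifications Chermak constructs in the rigid setting: $\eta_\L$ sends $f$ to the class of the conjugation isomorphism $(f, S_f, S_{f^{-1}})$, while $\epsilon_\T$ is built by selecting for each $\equiv$-class its $\uparrow$-maximal representative. Naturality in the broader setting then reduces to chasing definitions on an arbitrary $\beta$ (resp. $\alpha$); the relevant squares commute on representatives because $F$ and $G$ are built to follow $\beta$ and $\alpha$ pointwise. The restriction to $\mathsf{T}_0$ and $\mathsf{L}_0$ is automatic from the construction: if $\beta$ is rigid, then $F(\beta)$ fixes $\delta_S(s)$ pointwise and so is rigid; conversely, if $\alpha$ is rigid, then by the uniqueness in Proposition~\ref{P:aut-trans} the induced $\beta$ on $S$ is the identity, whence $G(\alpha)$ restricts to the identity on $S$ and is rigid.

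The main obstacle I expect is the second step: verifying that $G(\alpha)$ is well-defined on $\equiv$-classes and extends to a partial group isomorphism. Chermak handles this for rigid $\alpha$, where agreement with $\delta_S$ makes preservation of the Sylow structure and its inclusions tautological. For general $\alpha$ one must use both that $\alpha$ is an equivalence of categories (to handle isomorphisms and the domain $\bD$) and that it sends inclusions to inclusions (to preserve $\uparrow$, and thereby $\equiv$). These are exactly the two conditions isolated in Definition~\ref{D:iso-trans}, and together they should suffice, making the extension beyond Chermak's rigid result essentially bookkeeping rather than new substantive work.
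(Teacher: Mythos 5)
Your proposal matches the paper's proof essentially step for step: you define the same two functors (the paper calls them $\Theta$ and $\Lambda$) on objects via the constructions in Sections~\ref{sss:loc-to-trans}--\ref{sss:trans-to-loc}, define them identically on morphisms, and observe that well-definedness of $G(\alpha)$ on $\equiv$-classes follows from the fact that sending inclusions to inclusions preserves $\uparrow$ (and hence $\equiv$), which is exactly the argument given in the paper via \cite[Lemma~A.7(b)]{Chermak2013}. Your natural transformation $\eta_\L(f) = [(f,S_f,S_{f^{-1}})]$ agrees with the paper's $\zeta_\L(f) = [(f,S_f,{}^fS_f)]$ since $S_{f^{-1}} = {}^fS_f$, and your treatment of the rigid subcategories is the same.

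One small imprecision: you describe $\epsilon_\T\colon\T\to FG(\T)$ as ``selecting for each $\equiv$-class its $\uparrow$-maximal representative,'' which is not quite what the map does. The map sends $\phi\colon P\to Q$ to $([\phi_0],P,Q)$ where $\phi_0\colon P\to\pi(\phi)(P)$ is the corestriction of $\phi$ to an isomorphism; the uniqueness of $\uparrow$-maximal elements in each class (Chermak's Lemma~A.8(a)) is instead what one uses to see that this is a bijection on morphism sets, not to define the map. This doesn't change the structure of the argument, but the phrasing as written does not define a functor $\T\to FG(\T)$.
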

\begin{remark}
Strictly speaking, in order for the restriction of the functor $\mathsf{T}
\to \mathsf{L}$ (to be constructed in the proof) to induce an equivalence
between $\mathsf{T}_0$ and $\mathsf{L}_0$, we must make two canonical
identifications of $S$ with other incarnations of $S$. It is possible that a
more precise statement could be made involving a category of $S$-rigid
localities, where an $S$-rigid locality is a locality $\L$ together with an
embedding $S \hookrightarrow \L$ of partial groups which satisfies natural
conditions. But we do not pursue that, since our interest here is mainly in
Corollary~\ref{C:equiv-aut}.
\end{remark}
\begin{proof}[Proof of Theorem~\ref{T:local-equiv-trans}]
Define functors $\Theta \colon \mathsf{L} \to \mathsf{T}$ and $\Lambda
\colon \mathsf{T} \to \mathsf{L}$ as follows.  On objects, the functors are
as described in Subsections~\ref{sss:loc-to-trans} and
\ref{sss:trans-to-loc}.  Let $\gamma \colon \L \to \L'$ be an isomorphism
between the two localities $(\L,\Delta,S)$ and $(\L',\Delta',S')$. Define a
functor $\Theta(\gamma) \colon \T_{\Delta}(\L) \to \T_{\Delta'}(\L')$ by the
rule
\begin{align*} 
P &\mapsto \gamma(P),\\
(f,P,Q) &\mapsto (\gamma(f), \gamma(P), \gamma(Q)). 
\end{align*}
$\Theta(\gamma)$ is an invertible functor with inverse $\Theta(\gamma^{-1})$, it
is clearly isotypical, it sends inclusions to inclusions because $\gamma(1) =
1$, and hence it is an isomorphism of transporter systems. Observe that if
$\Delta = \Delta'$ (so $S = S'$) and $\gamma$ is a rigid isomorphism, then
$\Theta(\gamma)(\delta_S(s)) = (s,S,S) = \delta'_S(s)$ for each $s \in S$, so
$\Theta(\gamma)$ is a rigid isomorphism of transporter systems.  It is then
clear that $\Theta$ determines a functor $\mathsf{L} \to \mathsf{T}$, which
restricts to send $\mathsf{L}_0 \to \mathsf{T}_0$. 

Conversely, given an isomorphism $\alpha \colon \T \to \T'$, form the
associated localities $(\L_\Delta(\T), \Delta, S)$ and $(\L_{\Delta'}(\T'),
\Delta', S')$ and define a function $\Lambda(\alpha) \colon \L_\Delta(\T) \to
\L_{\Delta'}(\T')$ via $\Lambda(\alpha)([\phi]) = [\alpha(\phi)]^{'}$, where
here we write $[-]^{'}$ for equivalence classes in $\Iso(\T')$.  As $\alpha$ is
invertible, it induces a bijection $\Delta \to \Delta'$ sending $S \mapsto S'$
and a bijection $\Iso(\T) \to \Iso(\T')$.  Since $\alpha$ sends inclusions
to inclusions, it preserves $\uparrow$ and $\equiv$, and hence
$\Lambda(\alpha)$ is a well-defined bijection. Given that $\alpha$ is a
functor, it follows from the definition of multiplication in $\L_\Delta(\T)$
and \cite[Lemma~A.7(b)]{Chermak2013} that $\Lambda(\alpha)$ is a partial group
homomorphism. Then $\Lambda(\alpha)$ restricts to a homomorphism from
$S$ to $S'$ (if we identify these with $\{[\delta_S(s)] \mid s \in S\}$ and
$\{[\delta'_{S'}(s')] \mid s' \in S'\}$ via $\delta$ and $\delta'$,
respectively), because $\alpha$ is isotypical. Further, if $\alpha$ is rigid,
then this translates directly to the condition that $\Lambda(\alpha)$ is a
rigid isomorphism of localities.  Again, $\Lambda(\alpha^{-1})$ is the inverse
of $\Lambda(\alpha)$, and so $\Lambda(\alpha)$ is an isomorphism of localities.
Thus $\Lambda$ is a functor which restricts to send $\mathsf{T}_0 \to
\mathsf{L}_0$. 

Define $\eta \colon \id_{\mathsf{T}} \to \Theta \circ \Lambda$ as follows.
For any transporter system $\T$, $\eta_{\T} \colon \T \to \Theta(\Lambda(\T))$
sends each object to itself, and it sends a morphism $\phi \colon P \to Q$ in
$\T$ to the triple $([\phi_0], P, Q)$, where $\phi_0$ is the unique morphism
from $P$ to $Q_0 := \pi(\phi)(P)$ in $\T$ such that $\delta_{Q_0,Q}(1) \circ
\phi_0 = \phi$. We will show that $\eta$ is a natural isomorphism of
functors. By \cite[Lemma~A.15]{Chermak2013}, $\eta_\T$ is a rigid
isomorphism of transporter systems, provided we make the identification of $S$
with the group of of equivalence classes $\{([\delta_S(s)],S,S) \mid s \in S\}$
via the canonical isomorphism. Let now $\alpha\colon \T \to \T'$ be any
isomorphism of transporter systems, and consider the naturality diagram:
\[
\xymatrix{
\T \ar[r]^-{\eta_\T} \ar[d]_-{\alpha} & \Theta(\Lambda(\T))
\ar[d]^-{\Theta(\Lambda(\alpha))} \\ \T' \ar[r]_-{\eta_{\T'}} &
\Theta(\Lambda(\T')).
}
\]
Fix a morphism $\phi\colon P \to Q$ in $\T$.  Then 
\[
\Theta(\Lambda(\alpha))([\phi_0], P,Q) = ([\alpha(\phi_0)], \alpha(P), \alpha(Q))
\]
while 
\[
\eta_{\T'}(\alpha(\phi)) = ([\alpha(\phi)_0], \alpha(P), \alpha(Q)).
\]
where $\alpha(\phi)_0$ is the unique morphism from $\alpha(P)$ to
$Q_1 := \pi'(\alpha(\phi))(\alpha(P))$ such that $\alpha(\phi) =
\delta_{Q_1, \alpha(Q)}(1) \circ \alpha(\phi)_0$. Note also that $\alpha(\phi)
= \delta_{\alpha(Q_0), \alpha(Q)}(1) \circ \alpha(\phi_0)$ as $\alpha$
sends inclusions to inclusions. Thus, to show that $\eta$ is natural, it
suffices by uniqueness of restrictions, Lemma~\ref{L:resext}(b), to show that
$Q_1 = \alpha(Q_0)$.  To this end, let $\beta$ be the isomorphism from $S$ to
$S'$ associated with $\alpha$ in Proposition~\ref{P:aut-trans}.  By
Proposition~\ref{P:aut-trans}, $\alpha(P) = \beta(P)$ for each $P \in \Delta$,
and we have 
\[ 
\pi'(\alpha(\phi))(\alpha(P)) = c_\beta(\pi(\phi))(\beta(P)) =
\beta(\pi(\phi)(P)) = \alpha(\pi(\phi)(P)), 
\] 
as required. This completes the proof that $\eta$ is a natural isomorphism. 

Next, given a locality $(\L,\Delta,S)$ define $\zeta_\L \colon \L \to
(\Lambda\circ \Theta)(\L)$ by 
\[
\zeta_\L(f) = [(f,S_f,{ }^f\!S_f)].
\]
We will show that $\zeta = (\zeta_\L)\colon \id_{\mathsf{L}} \to \Lambda \circ
\Theta$ is a natural isomorphism. Let $(f_n,\dots,f_1) \in \bD(\L)$, and set $f
= \Pi(f_n,\dots,f_1)$. By Definition~\ref{D:locality}(L1a), there are objects
$P_0, \dots, P_{n} \in \Delta$ such that $P_{i-1} \leq S_{f_i}$ and ${
}^{f_i}\!P_{i-1} = P_{i}$ for $i = 1,\dots,n$. Then $[(f_i,S_{f_i},{
}^{f_i}\!S_{f_i})] = [(f_{i},P_{i-1},P_i)]$ by definition of the equivalence
class $[-]$, and this implies that $\zeta_\L^*(f_n,\dots,f_1) :=
(\zeta_\L(f_n),\dots,\zeta_\L(f_1)) \in \bD(\Lambda(\Theta(\L)))$.  By
definition of the product in $\Lambda(\Theta(\L))$, we have
\[
\Pi(\zeta^*_\L(f_n,\dots,f_1)) = [(\Pi(f_n,\dots,f_1),P_0,P_n)] = [(f,P_0,P_n)]
= [(f,S_f,{ }^f\!S_{f})] = \zeta_\L(\Pi(f_n,\dots,f_1)),
\]
so $\zeta_\L$ is a partial group homomorphism.

There is an extension of Lemma~3.6 of \cite{Chermak2013} in which $S$ and $S'$
(and $\Delta$ and $\Delta'$) need not be equal, and for which Chermak's proof
remains valid. This will be used to show that $\zeta_\L$ is an isomorphism of
localities.  The typical element of $\Lambda(\Theta(\L))$ has the form
$[(f,P,Q)]$ for $f \in \L$, $P \leq S_f$, and $Q \geq { }^f\!P$. It is the
image of $f$ under $\zeta_\L$, since $\zeta_\L(f) = [(f,S_f,{ }^f\!S_f)] =
[(f,P,Q)]$ by the commutative diagram
\[
\xymatrix@C+2pc{
S_f \ar[r]^{(f,S_f,{ }^f\!S_f)} & { }^f\!S_f \\ 
P  \ar[r]_{(f,P,Q)} \ar[u]^{(1,P,S_f)} & Q \ar[u]_{(1,Q, { }^f\!S_f)}
}
\]
in $\Theta(\L)$, so $\zeta_\L$ is surjective. 

Set $S' = \{[(s,S,S)]\mid s \in S\} \leq \Lambda(\Theta(\L))$, and fix $s \in
S$ and $f \in \L$. Then $(f,s,f^{-1}) \in \bD(\L)$ via $X \in \Delta$ if and
only if 
\[
([(f, { }^{sf^{-1}\!X, { }^{fsf^{-1}}\!X})], [(s,{ }^{f^{-1}}\!X, { }^{sf^{-1}}\!X)], [(f^{-1},X,{ }^{f^{-1}}\!X)]) \in \bD(\Lambda(\Theta(\L)))
\]
by definition of the domain of the locality built out of the transporter system
$\Theta(\L)$.  Moreover, in this case, $fsf^{-1} \in S$ via $X
\in \Delta$ if and only if 
\[
[(fsf^{-1},X,{ }^{fsf^{-1}}\!X)] =
[(f, { }^{sf^{-1}}\!X, { }^{fsf^{-1}}\!X) \circ (s, { }^{f^{-1}}\!X, {
}^{sf^{-1}}\!X) \circ (f^{-1},X,{ }^{f^{-1}}\!X)] \in S'
\]
This shows that $\zeta_\L(S_f) = S'_{\zeta_\L(f)}$. 

Let $h \in \ker(\zeta_\L)$. Then $[(h,S_h,{ }^h\!S_h)] =
1_{\Lambda(\Theta(\L))} = [(1,S,S)]$. This means $(h,S_h,{ }^h\!S_h)$ is a
restriction of $(1,S,S)$, that is $(1,S_h,S) = (h,S_h,S)$, and hence $h = 1$.
This completes the check of the hypotheses of the extension of
\cite[Lemma~3.6]{Chermak2013}, and so $\zeta_\L$ is an isomorphism by that
lemma. Moreover, $\zeta_\L$ is a rigid isomorphism of localities, provided we
make the identification of $S$ with the group of equivalence classes
$\{[(s,S,S)]\mid s \in S\}$ via the canonical isomorphism.

Finally, it remains to verify naturality of $\zeta$. Given another locality
$(\L',\Delta',S')$ and isomorphism $\gamma\colon \L \to \L'$ mapping $S$
onto $S'$, we have for each $f \in \L$ that 
\[
\Lambda(\Theta(\gamma))(\zeta_\L(f)) = [(\gamma(f), \gamma(S_f),\gamma({ }^f\!S_f))]'
\]
while 
\[
\zeta_\L(\gamma(f)) = [(\gamma(f), S_{\gamma(f)}, { }^{\gamma(f)}\!S_{\gamma(f)})]'
\]
As $\gamma$ is an isomorphism mapping $S$ onto $S'$, $\gamma^*(\bD_\L(f)) =
\bD_{\L'}(\gamma(f))$, and hence $\gamma(S_f) = S_{\gamma(f)}$. Also, $\gamma({
}^f\!P) = { }^{\gamma(f)}\!\gamma(P)$ for each $P \in \Delta$ and $f \in \L$.
This establishes naturality and completes the proof of the theorem.
\end{proof}

\begin{corollary}\label{C:equiv-aut}
Fix a transporter system $(\T,\pi,\delta)$ and let $\L_\Delta(\T)$ be the
associated locality. Then the map 
\[
\Phi \colon \Aut(\T) \longrightarrow \Aut(\L_{\Delta}(\T))
\]
given by sending an automorphism $\alpha \in \Aut(\T)$ to the map
$\L_{\Delta}(\T) \to \L_\Delta(\T)$ which sends a class $[\phi]$ to
$[\alpha(\phi)]$, for each $\phi \in \Iso(\T)$, is an isomorphism of groups.
Moreover, $\Phi$ maps $\Aut_{0}(\T)$ onto $\Aut_0(\L_{\Delta}(T))$. 
\end{corollary}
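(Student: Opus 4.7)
The plan is to identify $\Phi$ with the restriction to automorphism groups of the functor $\Lambda\colon \mathsf{T} \to \mathsf{L}$ constructed in the proof of Theorem~\ref{T:local-equiv-trans}, and then derive both assertions from the fact that $\Lambda$ is part of an equivalence of categories. By construction, $\Lambda(\T) = \L_\Delta(\T)$ on objects, and $\Lambda(\alpha)$ is defined by $[\phi] \mapsto [\alpha(\phi)]$. So for any $\alpha \in \Aut(\T)$, the map $\Lambda(\alpha)$ in the category $\mathsf{L}$ coincides with $\Phi(\alpha)$, and the assignment $\Phi$ is well-defined precisely because $\Lambda$ is.

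Since $\Lambda$ is a functor and the categories $\mathsf{T}$ and $\mathsf{L}$ are groupoids (their morphisms are by definition isomorphisms), the map $\Phi$ respects composition and inverses, hence is a group homomorphism. To show $\Phi$ is bijective, recall that any equivalence of categories is fully faithful, so by Theorem~\ref{T:local-equiv-trans} the induced map
\[
\Lambda \colon \Hom_{\mathsf{T}}(\T,\T) \longrightarrow \Hom_{\mathsf{L}}(\L_\Delta(\T),\L_\Delta(\T))
\]
is a bijection. Restricted to automorphisms (which equal all morphisms in these groupoids), this is $\Phi$, so $\Phi$ is a group isomorphism.

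For the statement about rigid automorphisms, we use that the same theorem gives an equivalence between $\mathsf{T}_0$ and $\mathsf{L}_0$ after making the canonical identification of $S$ with $\{[\delta_S(s)] \mid s \in S\} \leq \L_\Delta(\T)$. Under this identification, $\alpha \in \Aut(\T)$ is rigid if and only if $\Phi(\alpha)$ is rigid: if $\alpha_S \circ \delta_S = \delta_S$, then $\Phi(\alpha)([\delta_S(s)]) = [\alpha_S(\delta_S(s))] = [\delta_S(s)]$ for each $s \in S$, so $\Phi(\alpha)$ fixes the identified copy of $S$ pointwise; conversely, if $\Phi(\alpha)$ fixes $[\delta_S(s)]$ for each $s$, then injectivity of $\delta_S$ (axiom~(B)) forces $\alpha_S \circ \delta_S = \delta_S$. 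Hence $\Phi$ restricts to a bijection $\Aut_0(\T) \to \Aut_0(\L_\Delta(\T))$, proving the ``onto'' claim.

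The argument is essentially bookkeeping: the only real content beyond naturality of the equivalence is the compatibility of the two notions of rigidity under the canonical identification of $S$ with $\{[\delta_S(s)]\}$, and this was already checked in the last part of the proof of Theorem~\ref{T:local-equiv-trans} when it was shown that $\Lambda$ sends rigid isomorphisms to rigid isomorphisms. So no substantial obstacle remains.
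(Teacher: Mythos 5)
Your proof is correct and takes the same route as the paper, which simply asserts that the corollary follows directly from Theorem~\ref{T:local-equiv-trans}: you have usefully made explicit that full faithfulness of the equivalence $\Lambda$ gives the bijection $\Aut(\T) \to \Aut(\L_\Delta(\T))$, and that the restricted equivalence $\mathsf{T}_0 \simeq \mathsf{L}_0$ handles the rigid automorphisms. The only small imprecision is in your direct converse argument for rigidity, where passing from $[\alpha_S(\delta_S(s))] = [\delta_S(s)]$ to $\alpha_S(\delta_S(s)) = \delta_S(s)$ first requires uniqueness of the maximal $\equiv$-representative (both lie in $\Aut_\T(S)$ and hence are already maximal, since $S$ has no proper overgroups) before injectivity of $\delta_S$ applies, but as you note this is a redundant side-check given the categorical argument.
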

\begin{proof}
This follows directly from Theorem~\ref{T:local-equiv-trans}.
\end{proof}

\begin{remark}
The obstruction theory for the existence and uniqueness of centric linking
systems ``up to isomorphism'' as given by Broto, Levi, and Oliver
\cite[Theorem~3.1]{BrotoLeviOliver2003}, see also
\cite[III.5.11]{AschbacherKessarOliver2011}, holds of course with respect to
the notion of isomorphism of centric linking systems used there. By
Proposition~\ref{P:aut-trans} and Corollary~\ref{C:equiv-aut}, this definition
coincides with the notion of ``rigid isomorphism'' of the associated
localities. Thus, Theorem~3.4 of \cite{Oliver2013} and Theorem~1.1 of
\cite{GlaubermanLynd2016} imply that any two centric linking localities (i.e.,
$\Delta$-linking systems with $\Delta = \F^c$ in the terminology of
\cite[p.49]{Chermak2013}) associated to a given saturated fusion system are
rigidly isomorphic in the sense of \cite{Chermak2013}.
\end{remark}

\subsection{Linking systems and linking localities}

Theorems~\ref{T:main-loc} and \ref{T:main-trans} do not hold for arbitrary
localities and transporter systems, as can be seen by considering an
appropriate finite group $G$ of the form $O_{p'}(G) \times H$, with $O_{p'}(G)$
supporting an automorphism of order $p^2$, and forming a locality as in the
standard Example~\ref{Ex:locality}. 

\begin{definition}\label{D:linkingsystem}
A finite group $N$ is of \emph{characteristic $p$} if $C_N(O_p(N)) \leq
O_p(N)$. A \emph{linking locality} is a locality $(\L,\Delta,S)$ such that
$\F_S(\L)^{cr}\subseteq \Delta$ and $N_\L(P)$ is of characteristic $p$ for each
$P \in \Delta$. A \emph{linking system} is a transporter system
$(\T,\delta,\pi)$ associated with a fusion system $\F$ having object set
$\Delta$ such that $\F^{cr} \subseteq \Delta$ and $\Aut_\T(P)$ is of
characteristic $p$ for each $P \in \Delta$.
\end{definition}

The assumption that $\L$ is a linking locality (in
Theorem~\ref{T:main-loc}) or a linking system (in Theorem~\ref{T:main-trans})
is necessary when applying \cite[Lemma~8.2]{GlaubermanLynd2016}, which
says that a rigid automorphism of a finite group of characteristic $p$ is
conjugation by an element of the center of a Sylow $p$-subgroup.

The definition of linking system appearing in
Definition~\ref{D:linkingsystem} was given by Henke \cite{Henke2019}. It is
more general than the usual definition in
\cite[Definition~III.4.1]{AschbacherKessarOliver2011}, which forces each object
to be $\F$-\emph{quasicentric}. In Henke's definition, the objects are forced
merely to be a subset of the larger collection of $\F$-subcentric subgroups of
$S$, namely the subgroups $P$ of $S$ with the property that
$O_{p}(N_\F(Q))$ is $\F$-centric for each fully $\F$-normalized conjugate $Q$
of $P$.  The term ``linking locality'' also appears first in \cite{Henke2019}
and refers to the same thing as a ``proper locality'' in \cite{ChermakFL2}.  By
\cite[Proposition~1]{Henke2019}, the equivalence between localities and
transporter systems given in Theorem~\ref{T:local-equiv-trans} restricts to an
equivalence between linking localities and linking systems. 

Examples of linking localities include localities of finite groups of Lie type
in characteristic $p$, where, by the Borel-Tits theorem, one may take $\Delta$
to be the set of nonidentity subgroups of a Sylow subgroup. On the other hand,
every finite group $G$ gives rise to a linking locality on the set $\Delta$ of
$\F_S(G)$-subcentric subgroups of a Sylow subgroup $S$, the main theorem of
\cite{Henke2019}.

\section{Rigid outer automorphisms of centric linking
systems}\label{S:rigid-cent}

In this section, we prove Theorems~\ref{T:main-loc} and \ref{T:main-trans} in
the case $\Delta = \F^c$, and we prove Theorem~\ref{T:main-cohom}.  Throughout,
we fix a saturated fusion system $\F$ over the finite $p$-group $S$ and a
linking locality $(\L,\Delta,S)$ on $\F$.

A version of the Alperin-Goldschmidt fusion theorem for linking localities was
proved by Chermak and is needed in the proof of Theorem~\ref{T:main-loc}.  We
state a special case of it in a flexible form. 
\begin{proposition}\label{P:aft}
Let $\C$ be any conjugation family for $\F$ and let $g \in \L$.  Then there are
$Q_i \in \C \cap \Delta$ and elements $g_i \in N_\L(Q_i)$ such that $g =
g_n\cdots g_1$.
\end{proposition}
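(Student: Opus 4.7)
The plan is to transfer an Alperin--Goldschmidt factorization of $c_g \colon S_g \to {}^g\!S_g$ from $\F = \F_S(\L)$ back to a factorization of $g$ in $\L$, using that $N_\L(Q)$ surjects onto $\Aut_\F(Q)$ for $Q \in \Delta$, and then handling the resulting centralizer discrepancy.

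First, since $\C$ is a conjugation family, the Alperin--Goldschmidt fusion theorem yields
\[
c_g \;=\; \psi_n|_{P_{n-1}} \circ \cdots \circ \psi_1|_{P_0}
\]
with $\psi_i \in \Aut_\F(Q_i)$, $Q_i \in \C$, $P_0 = S_g$, $P_n = {}^g\!S_g$, and $P_i = \psi_i(P_{i-1}) \leq Q_i$ for each $i$. By \cite[Proposition~2.11]{Chermak2013} we have $S_g \in \Delta$, and since $\Delta$ is closed under $\F$-conjugacy each $P_i \in \Delta$; since $\Delta$ is closed under overgroups in $S$, each $Q_i \in \Delta$ as well, so $Q_i \in \C \cap \Delta$. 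Choosing a lift $g_i \in N_\L(Q_i)$ of $\psi_i$, the chain $(P_0,\ldots,P_n)$ certifies via (L1a) that the word $(g_n,\ldots,g_1)$ lies in $\bD(\L)$, so $h := g_n \cdots g_1$ is defined in $\L$ and $c_h|_{S_g} = c_g|_{S_g}$. Consequently $z := h^{-1} g \in C_\L(S_g) \leq N_\L(S_g)$ and $g = h z$.

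It remains to decompose the residual centralizer term $z$. The natural strategy is induction on $|S : S_g|$, using $S_z \supseteq S_g$: the base case $S_g = S$ puts $g \in N_\L(S)$, and after replacing $\C$ by $\C \cup \{S\}$ (harmlessly still a conjugation family) one takes $n = 1$ with $Q_1 = S$. The main obstacle I anticipate is that this induction can stall when $S_z = S_g$ and $S_g \notin \C$: the residual element need not lie in any $\C$-normalizer, and passing from $g$ to $z$ does not strictly decrease the invariant. I would sidestep this either by engineering the initial AG decomposition so that $Q_1$ or $Q_n$ is a maximal element of $\C$ containing $S_g$ or ${}^g\!S_g$ (so that the centralizer correction is absorbed into a single normalizer factor), or by exploiting the characteristic-$p$ structure of $N_\L(S_g)$---first replacing $g$ by a locality-conjugate so that $S_g$ is fully $\F$-normalized---and applying the group-theoretic Alperin fusion theorem inside that finite group.
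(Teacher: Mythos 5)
The paper's proof is a citation, not a derivation: Chermak's Alperin--Goldschmidt theorem for localities, \cite[Theorem~3.5]{ChermakFL3}, already factors $g$ as $g_n\cdots g_1$ with $g_i \in N_\L(E_i)$ and $E_i \in \mathbf{A}(\F)\cup\{S\}$; since $\mathbf{A}(\F)$ coincides with the $\F$-essential subgroups, each $E_i$ lies in $\F^{cr}\cup\{S\}\subseteq\Delta$ and, by \cite[Proposition~I.3.3(b)]{AschbacherKessarOliver2011}, in any conjugation family $\C$. Your proposal instead tries to lift the fusion-system Alperin--Goldschmidt factorization to $\L$ by hand, and the first half is sound: $(g_n,\dots,g_1)\in\bD(\L)$ via the chain $(P_0,\dots,P_n)$, $h:=g_n\cdots g_1$ is defined, and $z:=h^{-1}g\in C_\L(S_g)\leq N_\L(S_g)$.

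But the gap you flag is genuine, and neither proposed patch closes it as stated. The first patch (absorb $z$ into a normalizer factor $N_\L(Q_1)$ with $Q_1\supseteq S_g$) fails outright: $C_\L(S_g)\not\leq N_\L(Q_1)$ in general, since an element centralizing $S_g$ is under no obligation to normalize the larger $Q_1$. The second patch (conjugate so $S_g$ is fully $\F$-normalized and apply the group-theoretic Alperin theorem inside the finite group $N_\L(S_g)$) is in spirit how Chermak proves \cite[Theorem~3.5]{ChermakFL3}, but as written it is circular: the $\L$-element used to move $g$ so that $S_g$ becomes fully normalized must itself be decomposed, so the argument needs to be organized as a proper (double) induction; one must also verify that the subgroups arising from Alperin inside $N_\L(S_g)$ give rise to members of $\C\cap\Delta$ whose $\L$-normalizers contain the factors. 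Until these steps are carried out, you have a sketch of a reproof of Chermak's theorem rather than a proof of the proposition.
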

\begin{proof}
Recall, by definition of a linking locality (proper locality), that $\F^{cr}
\subseteq \Delta$. Further, the collection $\mathbf{A}(\F)$ defined in
\cite[Notation~3.3]{ChermakFL3} is a subset of $\F^{cr}$ and coincides with the
collection of $\F$-essential subgroups
\cite[Definition~I.3.2]{AschbacherKessarOliver2011}. So the assertion is a
special case of \cite[Theorem~3.5]{ChermakFL3}, given that the collection
of $\F$-essential subgroups is contained in any conjugation family, cf.
\cite[Proposition~I.3.3(b)]{AschbacherKessarOliver2011}. 
\end{proof}

Proposition~\ref{P:aft} has the immediate consequence that an automorphism
which is the identity on $N_\L(Q)$ for each $Q \in \C \cap \Delta$ is the
identity automorphism of $\L$.  We take the opportunity to prove below a more
general statement which generalizes Lemma~5.4 of \cite{GlaubermanLynd2016} to
the setting of linking localities.  We refer to
\cite[Definition~7.14]{Craven2011} for the definition of a positive
characteristic $p$-functor $W$, which we call a conjugacy functor for short.
There is a mistake in the proof of \cite[Lemma~5.4]{GlaubermanLynd2016}, in
which $W(Q)$ is claimed to be well-placed, given that $Q$ is. This seems
unlikely to be true. It is true that $W(Q)$ is conjugate to a well-placed
subgroup, and we give a correct argument in the proof of Lemma~\ref{L:5.4}.

\begin{lemma}
\label{L:5.4}
Let $\tau$ be an automorphism of $\L$. Fix a conjugacy functor $W$ for $\F$,
let $\C$ be the associated conjugation family consisting of those subgroups of
$S$ which are well-placed with respect to $W$, and set
\[
\W = \{Q \in \C \cap \Delta \mid W(Q) = Q\}.
\]
Assume that $W(Q) \in \Delta$ and $W(W(Q)) = W(Q)$ whenever $Q \in \Delta$.  If
$\tau$ is the identity on $N_\L(Q)$ for each $Q \in \W$, then $\tau$ is the
identity automorphism of $\L$.  
\end{lemma}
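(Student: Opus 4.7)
The plan is to perform two successive reductions: first from arbitrary elements of $\L$ to normalizers of subgroups in $\C \cap \Delta$ using the Alperin--Goldschmidt fusion theorem for linking localities, and then from $\C \cap \Delta$ down to $\W$ using the conjugacy functor $W$. By Proposition~\ref{P:aft}, each $g \in \L$ can be written as a product $g_n \cdots g_1$ with $g_i \in N_\L(Q_i)$ for some $Q_i \in \C \cap \Delta$. Since $\tau$ is a homomorphism of partial groups, it is enough to show that $\tau$ acts as the identity on $N_\L(Q)$ for every $Q \in \C \cap \Delta$.

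I would handle this by downward induction on $|Q|$, with the base case $Q = S$ covered by $S \in \W$ (as $W(S) = S$ and $S \in \C$). For the inductive step, fix $Q \in \C \cap \Delta$ with $Q < S$ and set $R = W(Q)$. By hypothesis $R \in \Delta$ and $W(R) = R$, and because $W$ is a positive characteristic $p$-functor, $R$ is characteristic in $Q$; in particular $N_\L(Q) \leq N_\L(R)$, so it suffices to prove $\tau|_{N_\L(R)} = \id$. If $R$ is itself well-placed then $R \in \W$ and we are done by hypothesis. Otherwise, choose an $\F$-conjugate $R^*$ of $R$ that is well-placed, using the standard fact that every $\F$-conjugacy class meets $\C$. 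Since $W$ is $\F$-equivariant, $W(R^*) = R^*$, and hence $R^* \in \W$.

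To transfer the identity from $N_\L(R^*)$ to $N_\L(R)$, I would select an element $h \in \L$ realizing an $\F$-isomorphism $R \to R^*$, i.e.\ with $R \leq S_h$ and ${}^h R = R^*$. If $\tau(h) = h$, then for any $f \in N_\L(R)$ the conjugate ${}^h f$ lies in $N_\L(R^*)$, whence $\tau({}^h f) = {}^h f$; applying $\tau$ to the relation ${}^h f = hfh^{-1}$ and using $\tau(h) = h$ forces $\tau(f) = f$ by injectivity of conjugation in $\L$. To arrange $\tau(h) = h$, one further decomposes the $\F$-iso $R \to R^*$ via Alperin--Goldschmidt into a product of restrictions of automorphisms of well-placed subgroups $U_i$, lifts each such factor to an element of $N_\L(U_i) \subseteq \L$, and invokes the inductive hypothesis on each $U_i$.

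The main obstacle is precisely controlling this last decomposition so that each $U_i$ strictly contains $Q$, which is what allows the downward induction on $|Q|$ to close. This is the step at which the original argument in \cite[Lemma~5.4]{GlaubermanLynd2016} went wrong by asserting that $W(Q)$ is itself well-placed; the correction is to pass to the $\F$-conjugate $R^*$ and to stratify the Alperin--Goldschmidt factorization of the iso $R \to R^*$ by order, so that each intermediate well-placed subgroup $U_i$ is properly larger than $Q$ and the induction applies.
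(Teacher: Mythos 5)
Your proposal follows the paper's overall strategy: reduce via Proposition~\ref{P:aft} to showing $\tau$ is the identity on $N_\L(Q)$ for every $Q \in \C\cap\Delta$; run a downward induction on $|Q|$; and in the inductive step use the inclusion $N_\L(Q)\leq N_\L(W(Q))$ together with a well-placed $\F$-conjugate of $W(Q)$ and an Alperin--Goldschmidt factorization of the conjugating element. There is a small slip in the base case: $W(S)=S$ is false in general (take $W = J$). What is true, and what the paper uses, is that $W(S)$ lies in $\W$ — it is normal in $S$, hence fully normalized and in fact well-placed, and $W(W(S)) = W(S)$ with $W(S)\in\Delta$ by hypothesis — so $\tau$ is already the identity on $N_\L(W(S))\geq N_\L(S)$, which handles $Q=S$.

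The substantive gap is exactly where you flag it. You correctly identify that the induction can only close if each intermediate subgroup $U_i$ in the factorization of the conjugating element satisfies $|U_i|>|Q|$, but your proposed fix — ``stratify the Alperin--Goldschmidt factorization of the iso $R\to R^*$ by order'' — does not produce this bound. If $h$ merely conjugates $R = W(Q)$ into $S$, the controlled form of Alperin--Goldschmidt gives only $U_i \geq {}^{h_i\cdots h_1}R$, i.e.\ $|U_i|\geq |W(Q)|$; since $W(Q)$ can be strictly smaller than $Q$ (again $W=J$), this is not enough. The paper's resolution is to conjugate the \emph{larger} subgroup $N_S(W(Q))$, not just $W(Q)$: by \cite[Lemma~7.23]{Craven2011} one may choose $g\in\L$ with ${}^g N_S(W(Q)) \leq S$ and ${}^g W(Q)$ well-placed. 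Then the controlled Alperin--Goldschmidt factorization $g = g_n\cdots g_1$ has each $g_i\in N_\L(R_i)$ with $R_i\geq {}^{g_i\cdots g_1}N_S(W(Q))$, whence $|R_i|\geq |N_S(W(Q))| \geq |N_S(Q)| > |Q|$ (the middle inequality because $W(Q)$ is characteristic in $Q$, the last because $Q<S$ and normalizers grow in $p$-groups). That is the missing ingredient: without moving $N_S(W(Q))$ along with $W(Q)$, the lower bound you need on the $U_i$ is simply not available, and the induction does not close.
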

\begin{proof}
Assume first that $W$ is the identity functor. Then $\W = \C \cap \Delta$. Let
$\tau \in \Aut(\L)$, and assume that $\tau$ is the identity on $N_\L(Q)$ for
all $Q \in \W = \C \cap \Delta$. For $g \in \L$, there are $Q_i \in \C \cap
\Delta$ and $g_i \in N_\L(Q_i)$ such that $g = g_n \cdots g_1$ by
Proposition~\ref{P:aft}. Then $\tau(g) = \tau(g_n) \cdots \tau(g_1) = g_n
\cdots g_1 = g$ by assumption.  Thus, $\tau$ is the identity automorphism. 

Next, we prove the result for general $W$ satisfying the hypotheses. By
the previous case with the identity functor in place of $W$, it suffices
to show that $\tau$ is the identity on $N_\L(Q)$ for each $Q \in \C \cap
\Delta$. Proceed by induction on the index of $Q$ in $S$.  Assume first that $Q
= S$.  Since $S \in \C$ (it is contained in every conjugation family), $W(Q) =
W(S) \in \C \cap \Delta$ by assumption on $W$.  Hence, as $\tau|_{N_{\L}(W(S))}
= \id_{N_{\L}(W(S))}$ and $N_\L(S) \leq N_\L(W(S))$, $\tau$ is the identity on
$N_{\L}(Q)$. Fix now $Q < S$ and assume that $\tau$ is the identity on
$N_\L(R)$ for all $R \in \Delta$ with $|R| > |Q|$. Let $g \in \L$ with ${
}^g\!N_S(W(Q)) \leq S$ and ${ }^gW(Q)$ well-placed by
\cite[Lemma~7.23]{Craven2011}.  We claim that $\tau$ fixes $g$. Write $g =
g_n\cdots g_1$ for subgroups $R_i \in \C \cap \Delta$ and $g_i \in N_{\L}(R_i)$
with $R_i \geq { }^{g_i \cdots g_1}\!N_S(W(Q))$.  So $|R_i| \geq |N_S(W(Q))|
\geq |N_S(Q)| > |Q|$. The claim now follows from the inductive hypothesis.  As
${ }^gW(Q)$ is well-placed and $\Delta$ is closed under $\L$-conjugation, we
have ${ }^gW(Q) \in \C \cap \Delta$. Now $N_{\L}({ }^gQ) \leq N_\L({ }^gW(Q))$
by the axioms for a conjugacy functor. Since $\tau$ is the identity on
$N_{\L}({ }^gW(Q))$ by hypothesis, we see that $\tau$ is the identity on
$N_{\L}({ }^gQ)$. Finally, since $\tau(g) = g$, $\tau$ is the identity on
$N_\L(Q)$, as desired.  
\end{proof}

\begin{proof}[Proof of Theorem~\ref{T:main-loc} in the case $\Delta = \F^c$]
Recall that $k(p) = 1$ if $p$ is odd, and $k(p) = 2$ if $p = 2$. Fix $\tau \in
\Aut_0(\L)$. For any finite $p$-group $P$, we take the abelian version of the
Thompson subgroup $J(P)$, namely, $J(P)$ is the subgroup generated by the
abelian subgroups of $P$ of order $d(P)$, where $d(P)$ is the maximum of the
orders of the abelian subgroups of $P$. 

We proceed in several steps to complete the proof. The main part of the proof
consists in showing that if the automorphism $\tau$ is the identity on
$N_\L(J(S))$, then $\tau^{k(p)} = \id_\L$. This is carried out in Steps
2-6.

\smallskip
\noindent 
\textit{Step 1.} We first arrange that $\tau$ restricts to the identity
automorphism of $N_\L(J(S))$. The restriction $\tau$ to $N_\L(J(S))$ is an
automorphism of $N_\L(J(S))$ which is identity on $S \leq N_\L(J(S))$.  Since
$\L$ is a linking locality and $J(S) \in \Delta = \F^c$, the normalizer
$N_\L(J(S))$ is of characteristic $p$. Thus, by
\cite[Lemma~8.2]{GlaubermanLynd2016}, we may fix $z \in Z(S)$ such that $\tau$
is conjugation by $z$ on $N_{\L}(J(S))$.  Then upon replacing $\tau$ by
$c_z^{-1}\tau$, where $c_z \colon \L \to \L$ denotes the rigid inner
automorphism which is (everywhere-defined) conjugation by $z$, we complete the
proof of Step 1.

Consider the following ordering on $\F^c$:
\[
Q <_J P \quad \iff \quad d(Q) < d(P) \quad \text{ or } \quad d(Q) = d(P) \text{
and } |J(Q)| < |J(P)|.
\]
We claim that $\tau^{k(p)}$ is the identity on $\L$.  Assume the contrary,
and, using Lemma~\ref{L:5.4} with $W$ the identity functor, choose $Q$ maximal
under $<_J$ with the property that $N_\L(Q)$ is not fixed by
$\tau^{k(p)}$. 

\smallskip
\noindent
\textit{Step 2.} We show that $Q$ may be taken to be well-placed with respect
to $J$.  Let $\C$ be the collection of subgroups of $S$ which are well-placed
with respect to the Thompson subgroup functor $J$.  Then $\C$ forms a
conjugation family for $\F$ by \cite[Corollary~7.26]{Craven2011}. Let $g \in
N_{\L}(Q)$ not fixed by $\tau^{k(p)}$.  By Proposition~\ref{P:aft}, we may
write $g$ as a product of elements $g_i \in N_\L(R_i)$ with $R_i \in \C \cap
\Delta$, and where $Q = Q_0 = Q_n$, $Q_i = {}^{g_i}Q_{i-1}$, and $R_i \geq
\gen{Q_{i-1},Q_i}$ for each $i$. Since $g$ is not fixed by $\tau^{k(p)}$,
some $g_i$ is not fixed by $\tau^{k(p)}$. Now as $Q$ is isomorphic to a
subgroup of $R_i$, we see that $d(Q) \leq d(R_i)$. Therefore, equality holds by
maximality of $Q$ under $<_J$. Then $|J(Q)| \leq |J(R_i)|$, so again equality
holds by maximality of $Q$. Hence, upon replacing $Q$ by $R_i$, we may assume
that $Q \in \C$. 

\smallskip
\noindent
\textit{Step 3.} Set $H = N_\L(Q)$ and $T = N_S(Q)$. We next show that $J(Q) =
J(QJ(T))$.  As $Q \in \Delta$, $H$ is of characteristic $p$. By
\cite[Lemma~8.2]{GlaubermanLynd2016}, we may fix $z \in Z(T)$ such that $\tau$
is conjugation by $z$ on $H$. Then $\tau^2$ is conjugation by $z^2$ on $H$.
Since $\tau^{k(p)}$ is not the identity on $H$, we have that
$z^{k(p)}$ is not centralized by $H$.  Applying
\cite[Theorem~A]{Glauberman1968wc}, we conclude that $z^{k(p)}$ is
not centralized by $N_H(J(T))$. Now $N_H(J(T)) \leq N_H(QJ(T))$ since $H =
N_H(Q)$, so that $\tau^{k(p)}$ is not the identity on $N_\L(QJ(T))$.  As $QJ(T)
\in \F^c$ and $d(Q) \leq d(QJ(T))$, we have equality by maximality of $Q$ under
$<_J$. Then $J(Q) \leq J(QJ(T))$, and so 
\begin{eqnarray}
\label{E:JQ=JQJT}
J(Q) = J(QJ(T)), 
\end{eqnarray}
again by maximality of $Q$ under $<_J$. 

\smallskip
\noindent
\textit{Step 4.} Here we show $J(T) = J(Q)$.  As $d(Q) \leq d(T) = d(J(T)) \leq
d(QJ(T))$, we have equality by Step 3. Thus, $d(Q) = d(T)$ and $Q \leq T$ yield
that $J(Q) \leq J(T) \leq J(QJ(T))$, and again we have equality by choice of
$Q$. This completes the proof of Step 4. 

\smallskip
\noindent
\textit{Step 5.} We next show that $J(Q)$ is $\F$-centric.  Suppose on the
contrary that $J(Q)$ is not $\F$-centric. By Step 2, $Q$ is well-placed. By
definition of well-placed, $J(T)$ is fully $\F$-normalized. Hence, $J(Q)$ is
fully $\F$-normalized by Step 4.  Since $J(Q)$ is fully $\F$-normalized and not
$\F$-centric, we have $C_S(J(Q)) \nleq J(Q)$.  Note that $C_S(J(Q)) \nleq Q$
since $J(Q)$ does contain its centralizer in $Q$.  Hence, $QC_S(J(Q)) > Q$, so
with $R := N_{QC_S(J(Q))}(Q)$, we have 
\[
R > Q.
\]
On the other hand, Step 4 shows that
\[
R = QN_{C_S(J(Q))}(Q) = QC_T(J(Q)) = QC_T(J(T)) = QZ(J(T)) = QZ(J(Q)) = Q,
\]
a contradiction.

\smallskip
\noindent
\textit{Step 6.} Lastly, we obtain a contradiction. Among all well-placed,
$\F$-centric subgroups maximal under $<_J$ whose normalizer in $\L$ is not
centralized by $\tau^{k(p)}$, choose $Q$ of minimum order. By Step 4 and the
definition of well-placed, $J(Q) = J(T)$ is well-placed. By Step 5, $J(Q)$ is
centric.  Note $\tau^{k(p)}$ is not the identity on $N_H(J(Q)) = H$ by choice of
$Q$.  Since $d(Q) = d(J(Q))$ and $J(J(Q)) = J(Q)$, we have that $Q = J(Q)$ by
minimality of $|Q|$.  Therefore, by Step 4,
\[
J(Q) = J(T) = J(N_S(Q)) = J(N_S(J(Q))).
\]
It now follows that $Q = J(Q) = J(S)$ by
\cite[Lemma~8.5(b)]{GlaubermanLynd2016}. Since $N_\L(J(S))$ is centralized by
$\tau$ by Step 1, this is a contradiction.

\smallskip
\noindent
\textit{Step 7.} We prove the splitting condition. Since Steps 1-6 show
that $\Out_0(\L) = 1$ if $p$ is odd, splitting is trivial in that case. So take
$p = 2$. Let $E$ be the subgroup of $\Aut_0(\L)$ consisting of those
automorphisms which restrict to the identity on $N_\L(J(S))$. Step 1 shows that
$E$ maps surjectively onto $\Out_0(\L)$ via the quotient map $\Aut_0(\L) \to
\Out_0(\L)$, while Steps 1-6 show that $E$ is a vector space over
$\mathbb{F}_{2}$. There is therefore a subgroup $E_0$ which is a complement to
$C_{\Aut_{Z(S)}(\L)}(N_\L(J(S)))$ in $E$ and which maps
isomorphically onto $\Out_0(\L)$. This proves the assertion.
\end{proof}

\begin{proof}[Proof of Theorem~\ref{T:main-trans} when $\L$ is a centric linking system]
This follows directly from Theorem~\ref{T:main-loc} in the centric linking
locality case, given Theorem~\ref{T:local-equiv-trans}.
\end{proof}

\begin{remark}\label{R:stronger}
The method of proof of Theorems~\ref{T:main-loc} and \ref{T:main-trans} in case
$\Delta = \F^c$ shows the slightly stronger conclusion: if $\tau$ is an
automorphism of a centric linking locality (centric linking system) which is
the identity on $N_\L(J(S))$ ($\Aut_{\L}(J(S))$, then $\tau^{k(p)} = \id_\L$. 
\end{remark}

We next want to prove Theorem~\ref{T:main-cohom}, but first recall
certain definitions from \cite[Section III.5]{AschbacherKessarOliver2011}.
Let $\O(\F^c)$ be the category with objects the
$\F$-centric subgroups, and with morphism sets
\[
\Mor_{\O(\F^c)}(P,Q) = \Inn(Q)\backslash\Hom_{\F}(P,Q),
\]
the set of orbits of $\Inn(Q)$ in its left action by composition. The
center functor 
\[
\Z_\F\colon \O(\F^c) \to \Ab
\]
is the functor which sends a subgroup $P$ to its center $Z(P)$, and sends a
morphism $[\phi]\colon P \to Q$ to the composite $Z(Q) \hookrightarrow
Z(\phi(P)) \xra{\phi^{-1}|_{Z(\phi(P))}} Z(P)$ induced by the
restriction of $\phi^{-1}\colon \phi(P) \to P$ to $Z(\phi(P))$.

We refer to Section~III.5.1 of \cite{AschbacherKessarOliver2011} for a
description of the bar resolution for functor cohomology and write $d$ for the
coboundary map. Recall that a $0$-cochain for $\Z_\F$ sends an object $P$ of
$\O(\F^c)$ to an element in $Z(P)$. A $1$-cochain sends a morphism $P
\xra{[\phi]} Q$ in the orbit category to an element in $Z(P)$. A $1$-cochain
for $\Z_\F$ is said to be \emph{inclusion-normalized} if it sends the class of
each inclusion $\iota_{P}^Q$ to $1 \in Z(P)$. Write $\hat{Z}^1(\O(\F^c),
\Z_\F)$ for the group of inclusion-normalized $1$-cocycles, and write
$\hat{B}^1(\O(\F^c), \Z_\F) \subseteq \hat{Z}^1(\O(\F^c), \Z_\F)$ for the group
of inclusion-normalized $1$-coboundaries.  

By the proof of \cite[III.5.12]{AschbacherKessarOliver2011}, there is a group
homomorphism
\[
\tilde{\lambda}\colon \hat{Z}^1(\O(\F^c), \Z_\F) \to \Aut(\L)
\]
given by sending a $1$-cocycle $t$ to the automorphism of $\L$ which is the
identity on objects, and which sends a morphism $\phi \colon P \to Q$ in $\L$
to $\phi \circ \delta_P(t([\phi]))$.  Next, consider the group homomorphisms
\[
\mathrm{cnst} \colon Z(S) \to C^0(\O(\F^c), \Z_\F) \quad \text{ and } \quad \conj \colon Z(S) \to \Aut_0(\L),
\]
where $\mathrm{cnst}$ sends an element $z \in Z(S)$ to the constant $0$-cochain
$u_z$ with value $z$ on each centric subgroup, and $\conj$ sends an element $z$
to the conjugation automorphism $c_{\delta_S(z)} \in \Aut_0(\L)$. 

\begin{lemma}
\label{L:iso-ses}
There is an isomorphism of short exact sequences
\begin{eqnarray}
\label{E:isom-exact-seq}
\xymatrix{
1  \ar[r] & \hat{B}^1(\O(\F^c),\Z_\F) \ar[r] \ar[d]_{du \mapsto u(S)Z(\F)} & \widehat{Z}^1(\O(\F^c), \Z_\F) \ar[d]_{\tilde{\lambda}} \ar[r] & \lim{\!}^1\Z_\F \ar[r] \ar[d]_{\lambda} & 1 \\
  1 \ar[r] & Z(S)/Z(\F) \ar[r]^{\mathrm{conj}} &  \Aut_0(\L) \ar[r] &  \Out_0(\L) \ar[r] & 1. \\
}
\end{eqnarray}
\end{lemma}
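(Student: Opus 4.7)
The plan is to verify that the two rows are exact, that each vertical map is a group isomorphism, and that the two squares commute. The middle vertical $\tilde{\lambda}$ is the key ingredient: its bijectivity is essentially the content of the proof of \cite[Proposition~III.5.12]{AschbacherKessarOliver2011}, and the content of the lemma is that $\tilde{\lambda}$ is compatible with the short exact sequence structures on either side.

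First I would dispose of exactness of the rows. The top row is the standard exact sequence computing $\lim^1 \Z_\F$ via the inclusion-normalized bar resolution on $\O(\F^c)$, cf.~\cite[Section~III.5.1]{AschbacherKessarOliver2011}. The bottom row is exact by the definitions: $\Out_0(\L) = \Aut_0(\L)/\Aut_{Z(S)}(\L)$, and the kernel of $\conj\colon Z(S) \to \Aut_0(\L)$ is exactly the fixed subgroup $Z(\F) = \{z \in Z(S) \mid \phi(z) = z \text{ for all } \phi \in \Hom_\F(\gen{z}, S)\}$.

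Second, I would analyze the left vertical. For a $0$-cochain $u$ of $\Z_\F$, the coboundary formula is
\[
du([\phi\colon P \to Q]) = \phi^{-1}(u(Q)) \cdot u(P)^{-1} \in Z(P).
\]
Requiring $du$ to be inclusion-normalized forces $u(P) = u(Q)$ as elements of $Z(P)$ whenever $P \leq Q$; in particular $u(P) = u(S)$ for every $\F$-centric $P$, which makes sense because $P$ being $\F$-centric implies $Z(S) \leq Z(P)$. Conversely, for each $z \in Z(S)$ the constant cochain $u_z(P) := z$ has inclusion-normalized coboundary, and two elements $z, z' \in Z(S)$ yield the same coboundary exactly when $z(z')^{-1}$ is fixed by $\phi^{-1}$ for every $\F$-morphism $\phi$, i.e.\ when $z(z')^{-1} \in Z(\F)$. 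Hence the left vertical is a well-defined isomorphism.

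Commutativity of the left square is a direct computation using axiom (C) for transporter systems: for $\psi\colon P \to Q$ in $\L$ and $z \in Z(S)$, applying (C) with $g = \pi(\psi)^{-1}(z) \in Z(P)$ yields $\delta_Q(z) \circ \psi = \psi \circ \delta_P(\pi(\psi)^{-1}(z))$, so that
\[
c_{\delta_S(z)}(\psi) = \delta_Q(z) \circ \psi \circ \delta_P(z)^{-1} = \psi \circ \delta_P\bigl(\pi(\psi)^{-1}(z) \cdot z^{-1}\bigr),
\]
which matches $\tilde{\lambda}(du_z)(\psi) = \psi \circ \delta_P(du_z([\psi]))$; the right square commutes because $\lambda$ is induced on quotients. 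The main obstacle will be matching sign and direction conventions in this last step, aligning the contravariance of $\Z_\F$ with the restriction formulas of axiom (C). Once the left square is established, a short diagram chase using the isomorphisms on the two leftmost columns gives that $\lambda$ is an isomorphism, completing the proof.
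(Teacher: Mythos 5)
Your proposal is correct and takes essentially the same approach as the paper: both rest on the fact, quoted from the proof of \cite[Proposition~III.5.12]{AschbacherKessarOliver2011}, that $\tilde{\lambda}$ is an isomorphism onto $\Aut_0(\L)$, together with the identity $\tilde{\lambda}(du_z) = c_{\delta_S(z)}$ and the fact that $\ker(\conj) = Z(\F)$ (cited by the paper from \cite[Lemma~1.14]{AOV2012}). The only difference is one of emphasis: the paper reads off the left-vertical isomorphism, the commutativity of both squares, and that $\lambda$ is an isomorphism onto $\Out_0(\L)$ directly from the diagram on p.~186 of \cite{AschbacherKessarOliver2011}, whereas you rederive the left column and left square from the coboundary formula and axiom~(C) and then finish with a five-lemma step; your caution about sign and direction conventions is unnecessary, as the computation $du_z([\psi]) = \pi(\psi)^{-1}(z)\,z^{-1} = \delta_P^{-1}\bigl(\delta_Q(z)^{-1}\psi\,\delta_P(z)\psi^{-1}\bigr)$ matches exactly.
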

\begin{proof}
This is essentially contained in the proof of
\cite[Proposition~III.5.12]{AschbacherKessarOliver2011}.  There the groups
$\Aut(\L)$ and $\Out(\L)$ are denoted $\Aut_{\typ}^I(\L)$ and
$\Out_{\typ}(\L)$. The commutative diagram displayed on
\cite[p.186]{AschbacherKessarOliver2011} is shown to have exact rows and
columns. Thus, $\tilde{\lambda}\colon \hat{Z}^1(\O(\F^c), \Z_{\F}) \to
\Aut(\L)$ is injective with image $\ker(\tilde{\mu}) = \Aut_0(\L)$.
Also, $\tilde{\lambda}$ induces an injective homomorphism $\lambda \colon
{\lim}^1\Z_\F \to \Out(\L)$ with image $\ker(\mu) = \Out_0(\L)$, and so
$\tilde{\lambda}$ and $\lambda$ are isomorphisms after restricting to these
codomains. Thus, the commutativity of this diagram also gives that the right
square in \eqref{E:isom-exact-seq} commutes.

Second, from the proof of \cite[III.5.12]{AschbacherKessarOliver2011}, the
composite $d \circ \mathrm{cnst}$ has image $\hat{B}^1(\O(\F^c), \Z_\F)$,
where, for each $z \in Z(S)$, the image $du_z$ of $u_z$ under the coboundary
map is inclusion-normalized, and $\tilde{\lambda}(du_z)$ is conjugation by
$\delta_S(z)$ on $\L$.  The composite $\hat{B}^1(\O(\F^c),\Z_\F)
\hookrightarrow \hat{Z}^1(\O(\F^c), \Z_\F) \xra{\tilde{\lambda}} \Aut_0(\L)$ is
injective. Thus, the kernel of the composite $d \circ \mathrm{cnst}$ is the
same as the kernel of $\conj$. But $\ker(\conj) = Z(\F)$ by
\cite[Lemma~1.14]{AOV2012}.  Therefore, the inverse $du \mapsto u(S)Z(\F)$ of
the isomorphism $Z(S)/Z(\F) \to \hat{B}^1(\O(\F^c), \Z_\F)$ induced by $d \circ
\mathrm{cnst}$ makes the left square in \eqref{E:isom-exact-seq} commute. 
\end{proof}

\begin{proof}[Proof of Theorem~\ref{T:main-cohom}] 
By Theorem~\ref{T:main-trans} in the case $\Delta = \F^c$, the sequence $1 \to
\Aut_{Z(S)}(\L) \to \Aut_0(\L) \to \Out_0(\L) \to 1$ is split exact.  As
$\Aut_{Z(S)}(\L)$ is the image of the conjugation map $Z(S)/Z(\F) \to
\Aut_0(\L)$, it follows from Lemma~\ref{L:iso-ses} that the sequence $1 \to
\hat{B}^1(\O(\F^c), \Z_\F) \to \hat{Z}^1(\O(\F^c), \Z_\F) \to {\lim}^1\Z_\F \to
1$ is also split exact and that ${\lim}^1 \Z_\F \cong \Out_0(\L)$ is
elementary abelian. 
\end{proof}

\section{Extending to larger object sets}\label{S:descent}

In this section, we observe via Chermak descent
\cite[Theorem~5.15]{Chermak2013} that the group of rigid automorphisms does not
change when a centric linking locality is expanded to a larger object set.
Recall from \cite{Henke2019} that a subgroup $P$ of $S$ is said to be
$\F$-\emph{subcentric} if for each fully $\F$-normalized $\F$-conjugate $Q$ of
$P$, the subgroup $O_{p}(N_\F(Q))$ is $\F$-centric. The set of $\F$-subcentric
subgroups is denoted $\F^s$.

\begin{proposition}\label{P:resiso}
Let $\L^+$ be a linking locality with object set $\Delta^+$ and fusion system
$\F$ over a $p$-group $S$. Let $\Delta \subseteq \Delta^+$ be a subset
which contains $\F^{cr}$ and is closed under $\F$-conjugacy and passing to
overgroups. Assume that $\L^+|_{\Delta} = \L$. Then restriction induces an
isomorphism $\Aut_{0}(\L^+) \to \Aut_0(\L)$ which restricts to an isomorphism
$\Aut_{Z(S)}(\L^+) \to \Aut_{Z(S)}(\L)$. 
\end{proposition}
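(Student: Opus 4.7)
The plan is to verify that restriction defines a bijection $\rho\colon \Aut_0(\L^+) \to \Aut_0(\L)$ by proving injectivity (via Proposition~\ref{P:aft}) and surjectivity (via Chermak's iterative descent). The restriction to $\Aut_{Z(S)}$ will follow at once: conjugation by $z \in Z(S)$ on $\L^+$ restricts to conjugation by $z$ on $\L$, giving $\rho(\Aut_{Z(S)}(\L^+)) \subseteq \Aut_{Z(S)}(\L)$, and the reverse inclusion uses injectivity (if $\rho(\tau) = c_z$ then $\tau c_z^{-1}$ restricts to $\id_\L$ and is hence $\id_{\L^+}$, so $\tau = c_z$).

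For injectivity, suppose $\tau \in \Aut_0(\L^+)$ restricts to the identity on $\L$. For each $Q \in \Delta$, any $f \in N_{\L^+}(Q)$ has $Q \in \Delta$ in its domain with ${}^f Q = Q \leq S$, so $f \in \L^+|_\Delta = \L$; hence $N_{\L^+}(Q) = N_\L(Q)$ for every $Q \in \Delta$. The collection of $\F$-essential subgroups is a conjugation family contained in $\F^{cr} \subseteq \Delta$, so Proposition~\ref{P:aft} allows us to factor any $g \in \L^+$ as $g_n \cdots g_1$ with each $g_i \in N_{\L^+}(Q_i) = N_\L(Q_i)$. Each $g_i$ is then fixed by $\tau$, so $\tau(g) = g$.

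For surjectivity, fix $\sigma \in \Aut_0(\L)$ and filter $\Delta^+$ as $\Delta = \Delta_0 \subsetneq \Delta_1 \subsetneq \cdots \subsetneq \Delta_n = \Delta^+$, where each $\Delta_i$ is obtained from $\Delta_{i-1}$ by adjoining a single $\F$-conjugacy class and every $\Delta_i$ remains closed under $\F$-conjugacy and overgroups (concretely, add classes in order of decreasing representative size, so that new subgroups always appear after their overgroups). Setting $\L_i = \L^+|_{\Delta_i}$ gives a chain of linking localities with $\L_0 = \L$ and $\L_n = \L^+$. Chermak's iterative descent \cite[Theorem~5.15]{Chermak2013}, together with the uniqueness of the extension up to a rigid isomorphism restricting to $\id_{\L_{i-1}}$, has the following functorial consequence: every rigid self-isomorphism $\sigma_{i-1}$ of $\L_{i-1}$ lifts to a unique rigid self-isomorphism $\sigma_i$ of $\L_i$. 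Induction on $i$ starting from $\sigma_0 = \sigma$ then yields $\sigma_n \in \Aut_0(\L^+)$ restricting to $\sigma$. The main technical point is the passage from Chermak's (non-functorial) uniqueness to this functorial version: one applies the uniqueness clause to two extensions of $\L_{i-1}$---namely $\L_i$ itself, and the same partial group with the embedded copy of $\L_{i-1}$ precomposed by $\sigma_{i-1}$---and composes the resulting unique rigid isomorphism with the tautological bijection to produce the desired self-map $\sigma_i$ of $\L_i$; the uniqueness of the lift is then a special case of the injectivity already established at this level.
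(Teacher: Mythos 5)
Your proof is correct in outline, and your injectivity argument is genuinely different from (and arguably more elementary than) the paper's. The paper establishes both injectivity and surjectivity simultaneously inside the inductive Chermak-descent framework: after adding one conjugacy class, it invokes the uniqueness clause of \cite[Corollary~5.16]{Chermak2013} to see that $\id_{\L^+}$ is the unique rigid automorphism of $\L^+$ restricting to $\id_\L$. You instead prove injectivity directly from the Alperin--Goldschmidt theorem for linking localities (Proposition~\ref{P:aft}): once you observe $N_{\L^+}(Q)=N_\L(Q)$ for $Q\in\Delta$ (correct, since $Q\leq S_f$ forces $S_f\in\Delta$ by overgroup-closure) and that the essential subgroups together with $S$ form a conjugation family lying inside $\F^{cr}\cup\{S\}\subseteq\Delta$, the factorization of any $g\in\L^+$ through normalizers of members of $\Delta$ finishes it. This is a clean, self-contained argument that does not touch the descent machinery, and it would be a reasonable alternative in the paper. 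The $\Aut_{Z(S)}$ statement is handled correctly as a corollary of injectivity.

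Where you diverge from the paper without gaining anything is in surjectivity: your filtration (add conjugacy classes in order of decreasing subgroup order) matches the paper's induction on $|\Delta^+-\Delta|$ and reduction to $\Delta^+=\Delta\cup T^\F$ with $T$ fully normalized, and the idea of applying Chermak's uniqueness to two extensions and composing is also what the paper does (via $\beta^+\colon\L^+(\beta|_M)\to\L^+$ and $\id^+\colon\L^+(\id_M)\to\L^+(\beta|_M)$). However, you elide the verification of \cite[Hypothesis~5.3]{Chermak2013}, which is the real work: one must show $T$ is not $\F$-centric-radical (hence $T<\hat T:=O_p(N_\F(T))$, so $\hat T\in\Delta$), that $M:=N_\L(T)$ is a genuine finite group which is a model for $N_\F(T)$ with $T\norm M$ and $N_S(T)\in\Syl_p(M)$, and that $\beta$ restricts to an automorphism of $M$. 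These are needed before Corollary~5.16 applies at all, and the paper spends most of its proof on precisely this. Your phrase ``the same partial group with the embedded copy of $\L_{i-1}$ precomposed by $\sigma_{i-1}$'' also does not quite describe what is happening; the correct object is Chermak's $\L^+(\beta|_M)$, built from the locality $\L$, the group $M$, and the automorphism $\beta|_M$ of $M$, not from the ambient $\L_i$. So the surjectivity half of your argument is the same strategy as the paper's but with the hypothesis-checking omitted; filling that in would bring it to the same level of rigor.
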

\begin{proof}
This follows from Corollary~5.16 of \cite{Chermak2013}, applied in the same way
as in \cite[Theorem~7.2]{Henke2019}. The proof is by induction on
$|\Delta^+-\Delta|$. If $\Delta^+ = \Delta$, then $\L^+ = \L$ and there is
nothing to prove. Let $T \in \Delta^+-\Delta$ be maximal under inclusion. We
claim that Hypothesis 5.3 of \cite{Chermak2013} holds.  Since $\Delta$ and
$\Delta^+$ are $\F$-invariant and closed under passing to overgroups, we can
replace $T$ by an $\F$-conjugate if necessary and assume that $T$ is fully
$\F$-normalized. By induction, we may also assume that $\Delta^+ = \Delta \cup
T^\F$. 

Let $\hat{T} = O_p(N_\F(T))$. Then $T \leq \hat{T}$, and we claim the
inclusion is proper. Assume otherwise. As an object of a linking locality, $T$
is $\F$-subcentric by \cite[Proposition~1(b)]{Henke2019}. So by
\cite[Proposition~3.18]{Henke2019}, it follows that $T \in \F^{cr}$. But then
$T \in \Delta$, which contradicts the choice of $T$. Thus, $T < \hat{T}$, so
$\hat{T} \in \Delta$ by choice of $T$.

Let $M = N_{\L}(T)$, and set 
\[
\Delta_T := \{N_P(T) \mid T \leq P \in \Delta\} = \{P \in \Delta \mid T
\leq P \leq N_S(T)\},
\]
where the second equality comes from maximality of $T$ in $\Delta^+-\Delta$.
By Lemma~7.1 of \cite{Henke2019}, $M$ is a finite group which is a model for
$N_\F(T)$. In particular $T$ is normal in $M$ and $N_S(T)$ is a Sylow
$p$-subgroup of $M$. So indeed, taking the identity $\L \to \L$ as a rigid
automorphism, Hypothesis 5.3 of \cite{Chermak2013} holds. Recall the
locality $\L_{\Delta_T}(M)$ from Example~\ref{Ex:locality}, and note that
$\L_{\Delta_T}(M) = M$ in the current situation, since each normal
$p$-subgroup of the fusion system of $M$ is normal in $M$
\cite[Theorem~2.1(b)]{Henke2019}. By Corollary~5.16 of \cite{Chermak2013},
there is a unique rigid isomorphism $\L^+(\id_M) \to \L^+$ which restricts to
the identity on $\L$, where the former is constructed in
\cite[Theorem~5.14]{Chermak2013} and defined after the proof of
\cite[Theorem~5.14]{Chermak2013}. Identify $\L^+(\id_M)$ and $\L^+$ via this
isomorphism. The identity automorphism is then the unique rigid automorphism of
$\L^+$ which is the identity on $\L$.  This shows that the restriction map
$\Aut_0(\L^+) \to \Aut_0(\L)$ is injective. 

To see surjectivity of restriction, take an arbitrary rigid isomorphism
$\beta$ of $\L$. Again by \cite[Corollary 5.16]{Chermak2013}, there is a
rigid isomorphism $\beta^+\colon \L^+(\beta|_M) \to \L^+$ which restricts to
$\beta$ on $\L$. Taking now $\L^+(\beta_M)$ in the role of $\L^+$, we
see that there is also a rigid isomorphism $\id^+ \colon \L^+ = \L^+(\id_M)
\to \L^+(\beta_M)$ which is the identity on $\L$. The composition $\beta^+\circ
\id^+ \in \Aut_0(\L^+)$ restricts to $\beta$ on $\L$, and this shows the
restriction map is surjective.
\end{proof}

\begin{proof}[Proof of Theorems~\ref{T:main-loc} and \ref{T:main-trans}] 
Let $(\L, \Delta, S)$ be an arbitrary linking locality. Now $\Delta \subseteq
\F^s$ by Proposition~1(b) of \cite{Henke2019}, so by Theorem~7.2 of
\cite{Henke2019}, there is a linking locality $(\L^+,\F^s,S)$ which restricts
to $\L$ on $\Delta$. As $\F^c \subseteq \F^s$, two applications of
Proposition~\ref{P:resiso} give an isomorphism of short exact sequences between
$1 \to \Aut_{Z(S)}(\L) \to \Aut_0(\L) \to \Out_0(\L) \to 1$ and $1 \to
\Aut_{Z(S)}(\L^+|_{\F^c}) \to \Aut_0(\L^+|_{\F^c}) \to \Out_0(\L^+|_{\F^c}) \to
1$. Theorem~\ref{T:main-loc} now follows from the proof in the case
$\Delta = \F^c$. Then Theorem~\ref{T:main-trans} follows from
Theorem~\ref{T:main-loc} and Theorem~\ref{T:local-equiv-trans}. 
\end{proof}

\begin{remark}
Given the results of this section, the stronger statement mentioned in
Remark~\ref{R:stronger} applies verbatim to arbitrary linking localities
(linking systems) with object set $\Delta$ containing $J(S)$. 
\end{remark}

\section{Comparing automorphisms of groups and linking systems}\label{S:grp}

One may wonder whether it is possible to recover from
Theorem~\ref{T:main-trans} the analogous theorems about groups, namely
\cite[Theorem~10]{Glauberman1968wc} for $p=2$ and \cite[Theorem~3.3]{GGLN2019}
for $p$ odd. This is possible, but the only way we know how to do it goes
through an argument similar to existing arguments for establishing the group
case anyway, so our way seems to have little additional value.  However, in the
process of trying to construct a proof, we obtained Theorem~\ref{T:kerkappa}
below, which appears to be new and of independent interest. It depends for its
proof on the $Z_p^*$-theorem \cite{Glauberman1966}, \cite[7.8.2,7.8.3]{GLS3}
that in a finite group with no normal $p'$-subgroups, any element which is
weakly closed in a Sylow $p$-subgroup is central.  

First we need to set up some notation. Let $p$ be a prime and let $G$ be a
finite group with Sylow $p$-subgroup $S$. We write $\L = \L_S^c(G)$ and $\F =
\F_S(G)$ for the centric linking system and fusion system of $G$. Thus, $\L$
has objects the $\F$-centric subgroups, or equivalently, the $p$-centric
subgroups of $G$, i.e the subgroups $P$ of $S$ with $C_G(P) = Z(P) \times
O_{p'}(C_G(P))$. Morphisms are given by 
\[
\Mor_{\L}(P,Q) = N_G(P,Q)/O_{p'}(C_G(P)). 
\]
where $N_G(P,Q) = \{g \in G \mid { }^g P \leq Q\}$ is the transporter set,
where composition is induced by multiplication in $G$, and where
$O_{p'}(C_G(P))$ acts on $N_G(P,Q)$ from the right. The structural functor
$\delta$ is the inclusion map, while $\pi$ sends a coset $gO_{p'}(C_G(P))$
to conjugation by $g$.

By Sylow's theorem, each outer automorphism of $G$ is represented by an
automorphism $\alpha \in N_{\Aut(G)}(S)$.  Such an automorphism induces an
isomorphism from $O_{p'}(C_G(P))$ to $O_{p'}(C_G(\alpha(P)))$ and a
bijection $N_G(P,Q) \to N_G(\alpha(P),\alpha(Q))$, for each pair of centric
subgroups $P$ and $Q$. It is then straightforward to check that $\alpha$
induces an automorphism of $\L$ by restriction in this way.  Let 
\[ 
\tilde{\kappa}_G \colon N_{\Aut(G)}(S) \to \Aut(\L)
\]
denote the resulting group homomorphism.  This map sends $\Aut_G(S)$ onto
$\{c_\gamma \mid \gamma \in \Aut_\L(S)\}$, and so there is an induced
homomorphism
\[
\kappa_G\colon \Out(G) \to \Out(\L). 
\]
The composition $\tilde{\mu}_G \circ \tilde{\kappa}_G\colon N_{\Aut(G)}(S) \to
\Aut(\F_S(G))$ is just restriction to $S$. Here $\tilde{\mu}_G$ is defined just
after Proposition~\ref{P:aut-trans}. 

\begin{theorem}\label{T:kerkappa}
Fix a prime $p$, a finite group $G$, and a Sylow $p$-subgroup $S$ of $G$. Let
$\L$ be the centric linking system for $G$. If $O_{p'}(G) = 1$, then
$\ker(\kappa_G)$ is a $p'$-group.
\end{theorem}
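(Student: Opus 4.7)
The plan is to reduce the result to an application of the $Z_p^*$-theorem inside a suitable semidirect product. It suffices to show that every $[\alpha] \in \ker(\kappa_G)$ of $p$-power order is trivial. First, I would choose a representative $\alpha \in N_{\Aut(G)}(S)$ of $p$-power order in $\Aut(G)$, obtainable by taking an element in a Sylow $p$-subgroup of the preimage of $\langle[\alpha]\rangle$ under $\Aut(G) \to \Out(G)$. Since $\tilde\kappa_G(\alpha) \in \Inn(\L)$ has $p$-power order and the kernel of $N_G(S) \to \Aut_\L(S) = N_G(S)/O_{p'}(C_G(S))$ is a $p'$-group while a Sylow $p$-subgroup of $N_G(S)$ is $S$ itself, I may write $\tilde\kappa_G(\alpha) = c_\gamma$ for $\gamma = s_0 O_{p'}(C_G(S))$ with $s_0 \in S$.

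Next, form the semidirect product $\tilde G = G \rtimes \langle\alpha\rangle$. Since $\tilde G/G \cong \langle\alpha\rangle$ is a $p$-group and $O_{p'}(\tilde G) \cap G \leq O_{p'}(G) = 1$, we have $O_{p'}(\tilde G) = 1$, and $\tilde S = S\langle\alpha\rangle$ is a Sylow $p$-subgroup of $\tilde G$. The key $p$-element is $x = s_0^{-1}\alpha \in \tilde S$; its conjugation action on $G$ inside $\tilde G$ is the automorphism $\beta = c_{s_0}^{-1}\alpha$, which by construction satisfies $\tilde\kappa_G(\beta) = \id_\L$. In particular $\beta|_S = \id_S$, and for every $\F^c$-centric $P, Q \leq S$ and every $h \in N_G(P, Q)$, we have $h^{-1}\beta(h) \in O_{p'}(C_G(P))$.

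The heart of the proof is to verify that $x$ is weakly closed in $\tilde S$ with respect to $\tilde G$. Given this, the $Z_p^*$-theorem applied to $x \in \tilde S \leq \tilde G$ forces $x \in Z(\tilde G)$ (using $O_{p'}(\tilde G) = 1$), so $x$ centralizes $G$ inside $\tilde G$, meaning $\beta = \id_G$, and hence $\alpha = c_{s_0} \in \Inn(G)$, which gives $[\alpha] = 1$ as desired. A direct computation in $\tilde G$ shows that weak closure amounts to the following: for each $g \in G$ with $g\beta(g)^{-1} \in S$, one has $g\beta(g)^{-1} = 1$. When $g \in N_G(P, Q)$ for some $\F$-centric $P, Q$, this follows immediately from $\tilde\kappa_G(\beta) = \id_\L$, since then $g\beta(g)^{-1} \in O_{p'}(C_G(Q)) \cap S = 1$.

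The main obstacle is to reduce the general case to this local one, since an arbitrary $g \in G$ need not normalize any $\F$-centric subgroup. Here I would invoke an Alperin-Goldschmidt style decomposition: $G$ is generated by $\bigcup_{P \in \F^c} N_G(P)$, so $g$ factors as a product of normalizer elements, and the remaining work is to propagate $\beta$ through this factorization while controlling the resulting commutator and conjugation terms, so as to reduce to the case already handled. This bookkeeping is where I expect the bulk of the argument to reside.
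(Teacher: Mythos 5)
The overall strategy is the same as the paper's: reduce to a $p$-element of $\Aut(G)$ whose image in $\Aut(\L)$ is trivial, form a semidirect product, and apply the $Z_p^*$-theorem. Your normalization differs slightly (the paper first replaces $a$ by an $\Inn(G)$-translate lying in $C_{\Aut(G)}(S)$, and only then observes $\tilde\kappa_G(a)$ must be conjugation by an element of $Z(S)$, so that after one more adjustment $\tilde\kappa_G(a)=\id_\L$ and $a$ itself plays the role of your $x$), and your step ``$\tilde\kappa_G(\alpha)=c_\gamma$ with $\gamma=s_0O_{p'}(C_G(S))$, $s_0\in S$'' needs a further conjugation of $\gamma$ into the Sylow $p$-subgroup $\delta_S(S)$ of $\Aut_\L(S)$ (a $p$-element need not lie in that particular Sylow). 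These are repairable details.

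The genuine gap is the final step, which you yourself flag. The claim you need is weak closure of $x$ in $\tilde S$, i.e.\ that $g\beta(g)^{-1}\in S$ forces $g\beta(g)^{-1}=1$ for \emph{all} $g\in G$. The plan to factor $g=g_n\cdots g_1$ with $g_i\in N_G(P_i)$ via Alperin--Goldschmidt \emph{generation in $G$} and then ``propagate $\beta$ through this factorization'' is unlikely to close: the hypothesis $g\beta(g)^{-1}\in S$ is a global condition on $g$ that does not descend to the individual $g_i$, and the telescoped product $g_n\cdots g_2\,(g_1\beta(g_1)^{-1})\,\beta(g_2)^{-1}\cdots\beta(g_n)^{-1}$ involves $p'$-elements living in different groups $O_{p'}(C_G(-))$, conjugated through the chain, with no evident reason for the whole product to vanish just because it lies in $S$. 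What the paper does instead is argue by contrapositive: assuming $a$ is \emph{not} weakly closed in $\hat S=S\langle a\rangle$, it applies the Alperin--Goldschmidt fusion theorem \emph{in the extension $\hat G$} (not in $G$) to the $\hat G$-conjugation between $a$ and one of its $\hat S$-conjugates, finding an $\F_{\hat S}(\hat G)$-centric-radical subgroup $\hat Q\le\hat S$ and $\hat h\in N_{\hat G}(\hat Q)$ with $a\in Z(\hat Q)$ and ${}^{\hat h}a\ne a$. The crucial missing ingredient is then \cite[Proposition~A.11(c)]{LeviOliver2002}, which guarantees that $Q:=\hat Q\cap G$ is $\F_S(G)$-centric-radical, hence an object of $\L$; writing $\hat h=ha^k$ with $h\in N_G(Q)$ one checks $[a,h]\in\hat S\setminus O_{p'}(N_G(Q))$, so $a$ fails to centralize $N_G(Q)/O_{p'}(N_G(Q))=\Aut_\L(Q)$, contradicting $\tilde\kappa_G(a)=\id_\L$. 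So the key tactical point is to run Alperin--Goldschmidt in $\hat G$ \emph{on the fusion of $\langle a\rangle$} rather than on a factorization of $g$ in $G$, and the pullback of centric-radical subgroups from $\hat G$ to $G$ is a substantive lemma, not bookkeeping.
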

The proof uses the $Z_p^*$-theorem only in the semidirect product of $G$ by a
$p$-power automorphism. So if $p=2$ or the composition factors of $G$ are
known, then this does not depend on the CFSG.
\begin{proof} 
Assume $O_{p'}(G) = 1$. Fix $a \in N_{\Aut(G)}(S)$ with $[a] \in
\ker(\kappa_G)$, and recall that $\tilde{\mu}_G \circ \tilde{\kappa}_G$ sends
$a$ to $a|_S$. Since $\tilde{\kappa}_G$ maps $N_{\Inn(G)}(S)$ onto $\Inn(\L) =
\{c_\gamma \mid \gamma \in \Aut_{\L}(S)\}$, we may adjust $a$ by an element of
$N_{\Inn(G)}(S)$ and take $a \in C_{\Aut(G)}(S)$.  Then by choice of $a$,
$\tilde{\kappa}_G(a) \in \Inn(\L) \cap \ker(\tilde{\mu}_G) = \Aut_{Z(S)}(\L)$.
Choose $z \in Z(S)$ such that $\tilde{\kappa}_G(a) = c_{z}$.  Replacing $a$ by
$ac_{z^{-1}}$, we may take $a \in \ker(\tilde{\kappa}_G)$. Finally, replacing
$a$ by a $p'$-power, we may take $a$ of $p$-power order.

We will show that, if $[a] \neq 1$ in $\Out(G)$, then $a$ normalizes but
does not centralize $H/O_{p'}(H)$ for some $p$-local subgroup $H = N_G(Q)$ with
$Q$ $p$-centric in $G$, that is, with $Q \in \F_S(G)^c$. Thus,
$\tilde{\kappa}_G(a)$ does not centralize $\Aut_\L(Q)$, and hence
$\tilde{\kappa}_G(a) \neq 1$, contrary to our choice of $a$.

So assume $[a] \neq 1$.  Let $\hat{G} = G\gen{a}$ be the semidirect product, and
set $\hat{S} = S\gen{a}$. Then $\hat{S}$ is Sylow in $\hat{G}$, and $\gen{a}
\leq Z(\hat{S})$.  Also, $\hat{S} =  S \times \gen{a}$ and $Z(\hat{S}) =
Z(S) \times \gen{a}$, etc.  Note that if $a$ is weakly closed in $\hat{S}$
with respect to $\hat{G}$, then by the $Z^*_p$-theorem, we have $a \in
Z(\hat{G})$ since $O_{p'}(\hat{G}) = O_{p'}(G) = 1$, so that $a = 1$, contrary
to assumption. 

So $a$ is not weakly closed in $\hat{S}$ with respect to $\hat{G}$. By the
Alperin-Goldschmidt fusion theorem in $\hat{G}$, there is a
$\F_{\hat{S}}(\hat{G})$-centric radical subgroup $\hat{Q} \leq \hat{S}$ and
$\hat{h} \in N_{\hat{G}}(\hat{Q})$ such that $a \in Z(\hat{S}) \leq
Z(\hat{Q})$, and $a \neq { }^{\hat{h}}a \in Z(\hat{Q})$.  By
\cite[Proposition~A.11(c)]{LeviOliver2002}, 
\begin{eqnarray}
\label{E:Qcentrad}
\text{$Q := \hat{Q} \cap G$ is $\F_S(G)$-centric radical.} 
\end{eqnarray}

Write $\hat{h} = ha^{k}$ for some integer $k$ and some $h \in G$. Since
$a^k \in \hat{Q}$ and $Q = \hat{Q} \cap G$, we have $h \in N_G(\hat{Q})
\leq N_G(Q)$. Also, $a \neq { }^{\hat{h}}a = { }^ha$. So $[a,h] \in
\hat{S}$.  Note that $a$ normalizes $N_{G}(Q)$, so $a$ normalizes
$O_{p'}(N_G(Q))$.  If $a$ centralizes $h$ modulo $O_{p'}(N_G(Q))$, then we
would have $[a,h] \in O_{p'}(N_G(Q)) \cap \hat{S} = 1$, a contradiction. Hence,
$a$ does not centralize $N_G(Q)/O_{p'}(N_G(Q))$. Together with
\eqref{E:Qcentrad}, this completes the proof of the proposition.
\end{proof}

A saturated fusion system $\F$ over $S$ is said to be \emph{tame} if $\F =
\F_S(G)$ for some finite group $G$ with Sylow $p$-subgroup $S$ such that the
map $\kappa_G$ is split surjective. Theorem~\ref{T:kerkappa} can be used to
show that the splitting condition in the definition of tame is unnecessary. 

\begin{proposition}
Let $\F$ be a saturated fusion system over the $p$-group $S$. If $\F \cong
\F_S(G)$ for some finite group $G$ such that the map $\kappa_G$ is surjective,
then $\F$ is tame. 
\end{proposition}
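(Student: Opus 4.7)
The plan is to apply Theorem~\ref{T:kerkappa} after reducing to the case $O_{p'}(G)=1$. Setting $\bar G := G/O_{p'}(G)$, one has $O_{p'}(\bar G)=1$, a canonical isomorphism of Sylow $p$-subgroups, and (by the standard Schur-Zassenhaus argument applied inside $SO_{p'}(G)$) an identification of fusion systems $\F_S(G) = \F_S(\bar G)$, hence of centric linking systems $\L_S^c(G) \cong \L_S^c(\bar G)$ by uniqueness. Naturality of the construction of $\kappa$ then gives that $\kappa_{\bar G}$ inherits the surjectivity of $\kappa_G$, so we may replace $G$ by $\bar G$ and assume $O_{p'}(G)=1$. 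Theorem~\ref{T:kerkappa} yields that $K := \ker(\kappa_G)$ is a $p'$-group.

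The goal is now to exhibit a splitting of
\[
1 \to K \to \Out(G) \xra{\kappa_G} \Out(\L) \to 1,
\]
which would show that $G$ itself witnesses the tameness of $\F$. The strategy is to split in two stages using the normal subgroup $\Out_0(\L) \leq \Out(\L)$. By Theorem~\ref{T:main-loc}, $\Out_0(\L)$ is an elementary abelian $p$-group, so the preimage $K' := \kappa_G^{-1}(\Out_0(\L))$ fits in a $p'$-by-$p$ extension $1 \to K \to K' \to \Out_0(\L) \to 1$, and Schur-Zassenhaus produces a complement $P_0 \leq \Out(G)$ with $P_0 \cong \Out_0(\L)$ and $P_0 \cap K = 1$.

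The hard part is lifting the further quotient $\Out(\L)/\Out_0(\L) \hookrightarrow \Out(\F)$ to a complement of $K$ in $\Out(G)$: its order may share prime divisors with $|K|$, so Schur-Zassenhaus does not apply directly at the second stage. To handle this, I would either extend $P_0$ to a full complement via an inductive lifting argument, working prime-by-prime in the normalizer $N_{\Out(G)}(P_0)$, or else pass to a different model by setting $G' := G \rtimes \tilde A$, where $\tilde A \leq N_{\Aut(G)}(S)$ is a suitable lift of a section of $\kappa_G$ on the complement of $\Out_0(\L)$ in $\Out(\L)$. Provided $\tilde A$ can be chosen to be a $p'$-subgroup acting on $G$ in a fusion-preserving manner, the group $G'$ will still have fusion system $\F$, while the outer automorphisms lifted by $\tilde A$ become inner in $G'$, producing the required splitting of $\kappa_{G'}$. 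The essential input throughout is that Theorem~\ref{T:kerkappa} ensures the kernel $K$ is a $p'$-group, which is what makes all of the Schur-Zassenhaus-type splittings available.
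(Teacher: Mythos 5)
Your opening reduction to $O_{p'}(G)=1$ and the invocation of Theorem~\ref{T:kerkappa} to get that $K=\ker(\kappa_G)$ is a $p'$-group both match the paper exactly. Where you diverge is in how the argument is finished: the paper does not attempt to construct a splitting by hand, but instead cites \cite[Lemma~1.5(b)]{BrotoMollerOliver2019}, which already encodes the passage from ``$\kappa_G$ surjective with $p'$-group kernel and $O_{p'}(G)=1$'' to ``$\F$ is tame.'' That lemma is precisely the black box doing the work your ``hard part'' tries to redo from scratch.

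And your version of that hard part has a real gap. The first-stage Schur--Zassenhaus step (complementing $K$ inside $\kappa_G^{-1}(\Out_0(\L))$) is fine since $\Out_0(\L)$ is a $p$-group by Theorem~\ref{T:main-loc}, but both of your proposed routes for the second stage are incomplete as written. The ``inductive, prime-by-prime lifting in $N_{\Out(G)}(P_0)$'' is a strategy statement, not an argument; there is no reason a complement to $K$ in a piece of $\Out(G)$ can be enlarged step by step, and indeed the extension $1 \to K \to \Out(G) \to \Out(\L) \to 1$ need not split in general even when $K$ is a $p'$-group, since $\Out(\L)/\Out_0(\L)$ can have order divisible by $p$ as well as by primes dividing $|K|$. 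The semidirect-product route $G' = G \rtimes \tilde A$ runs into a concrete obstruction you half-acknowledge: for $\F_S(G')$ to equal $\F$ with the same Sylow $S$, you need $\tilde A$ to be a $p'$-group, but a complement to $\Out_0(\L)$ in $\Out(\L)$ injects into $\Out(\F)$ and can carry $p$-torsion, so a lift $\tilde A$ of such a complement may not be a $p'$-group; enlarging the Sylow changes the fusion system. Getting around this (essentially, combining a $p'$-part built from a semidirect product with a careful treatment of the $p$-part) is exactly the content of the cited BMO lemma, and you would need to reproduce that argument rather than sketch it.
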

\begin{proof}
Fix such a $G$, let $\bar{G} = G/O_{p'}(G)$, and identify $S$ also with its
image in $\bar{G}$. Write $\F = \F_S(G)$, $\bar{\F} = \F_S(\bar{G})$, $\L =
\L_S^c(G)$, and $\bar{\L} = \L_S^c(\bar{G})$. The canonical homomorphism $G \to
\bar{G}$ induces isomorphisms $\L \to \bar{\L}$ and $\F \to \bar{\F}$.  As
in the proof of \cite[Lemma~2.19]{AOV2012}, there is a resulting
commutative diagram
\[
\xymatrix{
\Out(G) \ar[r] \ar[d]_{\kappa_G} & \Out(\bar{G}) \ar[d]^{\kappa_{\bar{G}}}\\
\Out(\L) \ar[r]^{\cong} & \Out(\bar{\L})
}
\]
As $\kappa_{G}$ is surjective, also $\kappa_{\bar{G}}$ is surjective, so we may
replace $G$ by $\bar{G}$ and take $O_{p'}(G) = 1$. The result now follows from
Theorem~\ref{T:kerkappa} and \cite[Lemma~1.5(b)]{BrotoMollerOliver2019}.
\end{proof}

In \cite{Glauberman1966b}, the first author showed, for a core-free group $G$
with Sylow $2$-subgroup $S$, that the group $C_{\Aut(G)}(S)$ has abelian
$2$-subgroups and a normal $2$-complement. The following proposition gives
further information and a reinterpretation of that situation.

\begin{proposition}
Let $G$ be a finite group with Sylow $p$-subgroup $S$, let $\L$ be the centric
linking system for $G$, and set $A = C_{\Aut(G)}(S)/C_{\Inn(G)}(S)$. If
$O_{p'}(G) = 1$, then $A \cong O_{p'}(A) \rtimes B$ where $B = 1$ if $p$ is
odd, and where $B$ is an elementary abelian $2$-group if $p =2$.  The normal
$p$-complement $O_{p'}(A)$ is the subgroup of $N_{\Aut(G)}(S)/N_{\Inn(G)}(S)$
consisting of those classes which have a representative that restricts to the
identity on $\L$. In particular, $\kappa_G$ is injective upon restriction
to any Sylow $p$-subgroup of $\Out(G)$.
\end{proposition}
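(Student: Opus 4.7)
The plan is to exhibit $A$ as a subgroup of $\Out(G)$ whose image under $\kappa_G$ lies in $\Out_0(\L)$, and then to combine Theorem~\ref{T:kerkappa} (to control the kernel) with Theorem~\ref{T:main-trans} (to control the image), finishing with a Schur-Zassenhaus argument for the split decomposition.

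First I would observe that the natural map $C_{\Aut(G)}(S) \to \Out(G)$ has kernel $C_{\Aut(G)}(S) \cap \Inn(G) = C_{\Inn(G)}(S)$, so $A$ identifies canonically with a subgroup of $\Out(G)$, and equally of $N_{\Aut(G)}(S)/N_{\Inn(G)}(S) \cong \Out(G)$. Since every element of $C_{\Aut(G)}(S)$ restricts to the identity on $S$, and hence acts trivially on $\F = \F_S(G)$, the restriction of $\kappa_G$ to $A$ takes values in $\Out_0(\L) = \ker(\mu_\L)$. Let $K$ denote the kernel of $\kappa_G|_A$.

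By Theorem~\ref{T:kerkappa} and the hypothesis $O_{p'}(G) = 1$, $K$ is a $p'$-group, while by Theorem~\ref{T:main-trans} the group $\Out_0(\L)$ (and therefore $A/K$) has exponent dividing $k(p)$: it is trivial for $p$ odd and elementary abelian for $p = 2$. When $p$ is odd this immediately gives $A = K = O_{p'}(A)$ with $B = 1$. When $p = 2$, $K$ is a normal Hall $p'$-subgroup of $A$, and Schur-Zassenhaus produces a complement $B$ with $A = K \rtimes B$, where $B \cong A/K$ is elementary abelian. In either case $K$ is a normal $p'$-subgroup of $p$-power index, so $K = O_{p'}(A)$.

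The characterization of $O_{p'}(A)$ then follows quickly. For $[a] \in A$, membership in $K$ is equivalent to $\tilde{\kappa}_G(a) \in \Inn(\L) = \{c_\gamma \mid \gamma \in \Aut_\L(S)\}$; and since $\tilde{\kappa}_G$ maps $N_{\Inn(G)}(S)$ onto $\Inn(\L)$ (as recalled at the start of the proof of Theorem~\ref{T:kerkappa}), one may pick $b \in N_{\Inn(G)}(S)$ with $\tilde{\kappa}_G(b) = \tilde{\kappa}_G(a)$ and replace $a$ by $ab^{-1}$, a representative of the same class with $\tilde{\kappa}_G(ab^{-1}) = \id_\L$. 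The closing ``in particular'' is a direct consequence of Theorem~\ref{T:kerkappa}, since any Sylow $p$-subgroup of $\Out(G)$ meets the $p'$-group $\ker(\kappa_G)$ trivially.

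The only delicate point is confirming that the Schur-Zassenhaus complement $B$ is not merely a $p$-group but elementary abelian; this is automatic because $A/K$ already has that structure. So there is no genuine obstacle: the argument essentially assembles Theorems~\ref{T:kerkappa} and \ref{T:main-trans} with standard extension-theoretic yoga.
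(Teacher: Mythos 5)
Your proposal is correct and takes essentially the same approach as the paper: identify $A$ with $\ker(\mu_\L \circ \kappa_G) \leq \Out(G)$, invoke Theorem~\ref{T:kerkappa} to see $\ker(\kappa_G)$ is a $p'$-group, invoke Theorem~\ref{T:main-trans} to see the image in $\Out_0(\L)$ has exponent $k(p)$, and assemble. Your write-up is if anything slightly more explicit than the paper's (which does not mention Schur--Zassenhaus by name nor unpack the characterization of $O_{p'}(A)$), though you could, for completeness, note the easy reverse inclusion that any class represented by an automorphism restricting to $\id_\L$ automatically lies in $A$, since such an automorphism fixes $S$ elementwise via $\delta_S$.
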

\begin{proof}
Note that $A$ is the kernel of the composite $\mu_G \circ \kappa_G$,
which is induced by restriction to $S$. By Theorem~\ref{T:main-trans}, the
kernel of $\mu_G$ is either $1$ or an elementary $2$-group in the cases $p$ odd
or $p=2$, respectively. So $\ker(\kappa_G) = O_{p'}(A)$ by
Theorem~\ref{T:kerkappa}. The last statement follows immediately.
\end{proof}

\bibliographystyle{amsalpha}{ }
\bibliography{/home/justin/work/math/research/mybib}
\end{document}